\documentclass[11pt,a4paper]{article}
\usepackage[utf8]{inputenc}
\usepackage[english]{babel}
\usepackage[T1]{fontenc}
\usepackage{amsmath}
\usepackage{amsfonts}
\usepackage{amssymb}
\usepackage{amsthm}
\usepackage{mathrsfs}
\usepackage{dsfont}
\usepackage[left=2cm,right=2cm,top=3cm,bottom=3cm]{geometry}
\usepackage{accents}
\usepackage{color}

\RequirePackage[colorlinks,citecolor=blue,urlcolor=blue]{hyperref}	

\newcommand{\ubar}[1]{\underaccent{\bar}{#1}}

\newtheorem{theorem}{Theorem}[section]
\newtheorem{lemma}[theorem]{Lemma}
\newtheorem{proposition}[theorem]{Proposition}

\theoremstyle{remark}
\newtheorem{remark}[theorem]{Remark}

\numberwithin{equation}{section}

\DeclareMathOperator{\dtv}{d_{TV}}
\DeclareMathOperator{\He}{He}
\newcommand{\lip}[1]{[#1]_{\textup{Lip}}}

\providecommand{\keywords}[1]
{
	\textbf{\textit{Keywords--}} #1
}

\providecommand{\MSC}[1]
{
	\textbf{\textit{MSC Classification--}} #1
}

\title{Total variation distance between two diffusions in small time with unbounded drift: application to the Euler-Maruyama scheme}
\author{Pierre Bras\footnote{Sorbonne Universit\'e, Laboratoire de Probabilit\'es, Statistique et Mod\'elisation, UMR 8001, case 158, 4 pl. Jussieu, F-75252 Paris Cedex 5, France. E-mail: \texttt{pierre.bras@sorbonne-universite.fr} and \texttt{gilles.pages@sorbonne-universite.fr}.} \footnote{Corresponding author}$\ $, Gilles Pag\`es\footnotemark[1] $\ $ and Fabien Panloup\footnote{LAREMA, Facult\'e des Sciences, 2 Boulevard Lavoisier, Université d'Angers, 49045 Angers, France. E-mail: fabien.panloup@univ-angers.fr}}
\date{}

\begin{document}

\maketitle

\begin{abstract}
We give bounds for the total variation distance between the solutions to two stochastic differential equations starting at the same point and with close coefficients, which applies in particular to the distance between an exact solution and its Euler-Maruyama scheme in small time. 
We show that for small $t$, the total variation distance is of order $t^{r/(2r+1)}$ if the noise coefficient $\sigma$ of the SDE is elliptic and $\mathcal{C}^{2r}_b$, $r\in \mathbb{N}$ and if the drift is $C^1$ with bounded derivatives, using multi-step Richardson-Romberg extrapolation. We do not require the drift to be bounded. Then we prove with a counterexample that we cannot achieve a bound better than $t^{1/2}$ in general.
\end{abstract}

\keywords{Stochastic Differential Equation, Euler scheme, Total Variation, Richardson-Romberg extrapolation, Aronson's bounds}

\MSC{65C30, 60H35}

\section{Introduction}

The convergence properties of Euler-Maruyama schemes to approximate the solution of a Stochastic Differential Equation (SDE) have been extensively studied, in particular for $L^p$ distances. However, the literature seems to lack some results about the convergence in total variation in small time.
More specifically, in this paper we consider the two following SDEs in $\mathbb{R}^d$ starting at the same point:
\begin{align*}
& X_0^x = x \in \mathbb{R}^d, & dX^x_t = b_1(t, X^x_t)dt + \sigma_1(t, X^x_t)dW_t, \\
& Y_0^x = x, & dY^x_t = b_2(t, Y^x_t) dt + \sigma_2(t, Y^x_t)dW_t,
\end{align*}
where $W$ is a Brownian motion.
We generally assume that for $i =1,2$, $b_i$ is Lipschitz continuous and that $\sigma_i$ is elliptic, bounded and Lipschitz continuous, but we do not assume that $b_i$ is bounded. Our objective is to give bounds of the total variation distance between the law of $X^x_t$ and the law of $Y^x_t$, denoted $\dtv(X_t^x, Y^x_t)$, as $t \to 0$. In particular, we apply our results to the case where $Y^x = \bar{X}^x$ is the one-step Euler-Maruyama scheme associated to the SDE $X$, given by
$$ dY^x_t = b_1(0,x)dt + \sigma_1(0,x)dW_t .$$

Such bounds are well known for $L^p$ distances and their associated Wasserstein distances and are known to be of order $t$ as $t \to 0$. Yet the literature seems to lack results as it comes to $\dtv$. If $\sigma_1=\sigma_2$ is constant, then it is classical background that $\dtv(X^x_t, Y^x_t)$ is of order $t$, using a Girsanov change of measure (see for example \cite[Proposition 4.1]{pages2020}) but this strategy cannot be applied to non-constant $\sigma$.
The difficulty of the total variation distance in small time is the following: considering its representation formula and comparing it with the $L^1$-Wasserstein distance, if $x$ and $y \in \mathbb{R}^d$ are close to each other and if $f:\mathbb{R}^d\to \mathbb{R}$ is Lipschitz continuous, then we can bound $|f(x)-f(y)|$ by $\lip{f}|x-y|$; whereas if $f$ is simply measurable and bounded, then we cannot directly bound $|f(x)-f(y)|$ in terms of $|x-y|$. Moreover the regularizing properties of the semi-group cannot be used in small time for the total variation distance.

Results in the literature focus on the Euler-Maruyama scheme.
In \cite{bally1996} is proved the convergence for a fixed time horizon $T>0$ and as $N\to\infty$, where $N$ is the number of steps in the Euler-Maruyama scheme on a finite horizon. More precisely, if $\sigma_1$ is elliptic and if $b_1$ and $\sigma_1$ are $\mathcal{C}^\infty$ with bounded derivatives (but $b_1$ and $\sigma_1$ are not supposed bounded themselves), then (\cite[Theorem 3.1]{bally1996})
$$ \forall x \in \mathbb{R}^d, \ \dtv(X^x_T,\bar{X}^{x,N}_{T}) \le \frac{K(T)(1+|x|^Q)}{N T^q} ,$$
where $\bar{X}^{x,N}_{T}$ stands for the Euler scheme with $N$ steps, where $Q$ and $q$ are positive exponents and where $K$ is a non-decreasing function depending on $b_1$ and $\sigma_1$.
The common strategy of proof for such bounds is to use Malliavin calculus in order to perform an integration by parts and to use bounds on the derivatives of the density.
However, we cannot infer a bound as $T \to 0$ since we do not know whether $K(T)/T^q \to 0$ as $T \to 0$ in general. In \cite{gobet2008} are given bounds in small time and as $N \to \infty$. Assuming that $\sigma_1$ is uniformly elliptic and that $b_1$ and $\sigma_1$ are bounded with bounded derivatives up to order $3$, then (\cite[Theorem 2.3]{gobet2008})
$$ \forall t \in (0,T], \ \forall x,y \in \mathbb{R}^d, \ |p(t,x,y) - \bar{p}^N(t,x,y)| \le \frac{K(T)T}{Nt^{(d+1)/2}} e^{-C|x-y|^2/t} ,$$
where $p$ and $\bar{p}^N$ denote the transition densities of $X^x$ and $\bar{X}^{x,N}$ respectively and where $C$ is a positive constant depending on $d$ and on the bounds on $b_1$ and $\sigma_1$ and on their derivatives. However, we cannot directly use this result for the total variation distance: taking $N=1$ yields
$$ \dtv(X^x_t, \bar{X}^{x}_t) = \int_{\mathbb{R}^d} |p(t,x,y) - \bar{p}^N(t,x,y)| dy \le K(T)T t^{-1/2} \int_{\mathbb{R}^d} \frac{1}{t^{d/2}} e^{-C|x-y|^2/t} ,$$
giving a bound in $t^{-1/2}$ which does not converge to $0$ as $t \to 0$. Moreover, \cite{gobet2008} assumes that $b_1$ is bounded. \cite{bencheikh2020} focuses on the case where $b_1$ is bounded and measurable but not necessarily regular and where $\sigma_1$ is constant; it proves that the convergence in total variation of the Euler scheme on a finite horizon which is regularized with respect to the irregular drift $b_1$ and with step $h$, is of order $\sqrt{h}$.

In the present paper, we first prove a convergence rate of order $t^{1/3}$ for $\dtv(X^x_t, Y^x_t)$, provided that for $i=1,2$, $\sigma_i$ is elliptic, $\sigma_i$ and $b_i$ are Lipschitz-continuous with respect to their time variable and that $\sigma_i$ is $\mathcal{C}^2_b$ and $b_i$ is $\mathcal{C}^1$ and Lipschitz-continuous with respect to their space variable. More generally, if we furthermore assume that $\sigma$ is $\mathcal{C}^{2r}_b$, then we obtain a convergence rate of order $t^{r/(2r+1)}$. 
Letting $r \to \infty$, we also prove that if $\sigma \in \mathcal{C}^{\infty}_b$ with some technical condition on the derivatives of the densities of the random variables $X^x_t$ and $Y^x_t$, then the convergence rate is of order $t^{1/2}\exp(C\sqrt{\log(1/t)})$ which is "almost" $t^{1/2}$.
Moreover, we provide an example using the geometric Brownian motion where the convergence rate is exactly $t^{1/2}$, thus showing that we cannot achieve better bounds in general.
To prove the bound in $t^{r/(2r+1)}$, we use a multi-step Richardson-Romberg extrapolation \cite{richardson1911} \cite{lemaire2017}, which is a method imported from numerical analysis that we use in our case for theoretical purposes. It relies on a Taylor expansion with null coefficients up to some high order. Such method can be used in more general settings with regularization arguments in order to improve the convergence rates (in our case, we improve $t^{1/3}$ into $t^{r/(2r+1)}$).

Interestingly, the difference between the drift coefficients $b_1 - b_2$ does not need to be small for our bounds to be valid. This is because the dominant term in $\dtv(X^x_t, Y^x_t)$ comes from the the diffusion part.

Recent results (see \cite{clement2021}) establish a convergence in small time at rate $t^{1/2}$ for the Euler scheme of certain classes of diffusions driven by stable Lévy processes, not directly including the Brownian case. This approach relies on Malliavin calculus techniques. In this work the "standard" drift is replaced in the Euler scheme by the flow of the associated (noiseless) ODE. This seems to be specific to Lévy driven SDEs. Adapting this approach to our general continuous framework is not as straightforward as could be expected and would deserve further investigations for future work.

The total variation distance is closely related to the estimation of the density of the solution to an SDE and this density satisfies a Fokker-Planck Partial Differential Equation PDE \eqref{eq:backward_kolmogorov}. If the drift is bounded, then the density and its partial derivatives admit sub-gaussian Aronson's bounds (see \cite{friedman} and Section \ref{sec:friedman}).
However, giving estimates and bounds for the solution of the PDE in the case of unbounded drift appears to be more difficult, see \cite{lunardi1997}, \cite{cerrai2000}, \cite{bertoldi2005}.
Recent improvements have been made in \cite{menozzi2021} using the parametrix method.
Studying this case is useful to study the convergence in total variation of SDE's with unbounded drift, in particular for the Langevin equation, very popular in stochastic optimization, and which reads
$$ dX_t = -\nabla V(X_t) dt + \sigma(X_t)dW_t ,$$
where in many cases, $V : \mathbb{R}^d \to \mathbb{R}$ has quadratic growth and $\nabla V$ has linear growth (see for example \cite{bras2021-2}).

In order to deal with unbounded $b_i$, we propose two different methods. First, we use a localization argument and "cut" the drift $b_i$ into $\tilde{b}_i$ outside a compact set, so that we can use bounds from \cite{friedman} for the bounded drift case. We use the Girsanov formula to explicit the dependence of these bounds in $\|\tilde{b}_i\|_\infty$.  A second method consists in using the density estimates from \cite[Section 4]{menozzi2021} to improve the dependency with respect to the bounds in $x$. However this second approach relies on advanced parametrix methods which require further regularity assumptions on the coefficients of the SDE and which are not fully detailed for higher order derivatives. Our first method is clearly much more elementary, starting from a quite general bound established for any pair of integrable random vectors (see Theorem \ref{thm:dtv_bound_L1_density}) and calling upon a standard regularization strategy which combined with a multistep procedure, seems to be at least quasi-optimal in a very general framework.

\bigskip

\textsc{Notations}

We endow the space $\mathbb{R}^d$ with the canonical Euclidean norm denoted by $| \boldsymbol{\cdot} |$. For $x \in \mathbb{R}^d$ and for $R>0$, we denote $\textbf{B}(x,R) = \lbrace y \in \mathbb{R}^d : \ |y-x| \le R \rbrace$.

We denote $\mathcal{M}_d(\mathbb{R})$ the set of $d \times d$ matrices with real coefficients.

For $M \in (\mathbb{R}^d)^{\otimes k}$, we denote by $\|M\|$ its operator norm, i.e. $\|M\| = \sup_{u \in \mathbb{R}^{d\times k}, \ |u|=1} M \cdot u$. If $M : \mathbb{R}^d \to (\mathbb{R}^d)^{\otimes k}$, we denote $\|M\|_\infty = \sup_{x \in \mathbb{R}^d} \|M(x)\|$.

For $k \in \mathbb{N}$ and if $f:\mathbb{R}^d \to \mathbb{R}$ is $\mathcal{C}^k$, we denote by $\nabla ^k f : \mathbb{R}^d \rightarrow (\mathbb{R}^d)^{\otimes k}$ its differential of order $k$.
If $f$ is Lipschitz continuous, we denote by $[f]_{\text{Lip}}$ its Lipschitz constant.
If $f :(t,x) \in \mathbb{R} \times \mathbb{R}^d \mapsto f(t,x)$ is $\mathcal{C}^k$ with respect to $x$, we still denote by $\nabla^k f$ its differential with respect to $x$.



We denote the total variation distance between two distributions $\pi_1$ and $\pi_2$ on $\mathbb{R}^d$:
$$ \textstyle \dtv(\pi_1,\pi_2) = 2 \sup_{A \in \mathcal{B}(\mathbb{R}^d)} |\pi_1(A) - \pi_2(A)|.$$
Without ambiguity, if $Z_1$ and $Z_2$ are two $\mathbb{R}^d$-valued random vectors, we also write $\dtv(Z_1,Z_2)$ to denote the total variation distance between the law of $Z_1$ and the law of $Z_2$.
We have as well
$$ \dtv(\pi_1,\pi_2) = \sup \left\lbrace \int_{\mathbb{R}^d} fd\pi_1 - \int_{\mathbb{R}^d} fd\pi_1, \ f : \mathbb{R}^d \to [-1,1] \ \text{measurable} \right\rbrace .$$
Moreover, we recall that if $\pi_1$ and $\pi_2$ admit densities with respect to some measure $\lambda$, then
$$ \dtv(\pi_1,\pi_2) = \int_{\mathbb{R}^d} \left|\frac{d\pi_1}{d\lambda} - \frac{d\pi_2}{d\lambda}\right| d\lambda .$$

We denote by $\mathcal{W}_1$ the $L^1$-Wasserstein distance.

For $x \in \mathbb{R}^d$, we denote by $\delta_x$ the Dirac mass at $x$.

If $Z$ is a Markov process with values in $\mathbb{R}^d$, we denote, when it exists,
its transition probability from $x$ to $y \in \mathbb{R}^d$ between times $s<t$, $p_{_Z}(s,t,x,y)$.

In this paper, we use the notation $C$ and $c$ to denote positive constants, which may change from line to line.

\section{Main results}

We consider the two following SDEs in $\mathbb{R}^d$:
\begin{align}
\label{eq:SDE_def:1}
& X_0^x = x \in \mathbb{R}^d, & dX^x_t = b_1(t, X^x_t)dt + \sigma_1(t, X^x_t)dW_t, \ t \in [0,T], \\
\label{eq:SDE_def:2}
& Y_0^x = x, & dY^x_t = b_2(t, Y^x_t) dt + \sigma_2(t, Y^x_t)dW_t, \ t \in [0,T],
\end{align}
where $T$ is a finite time horizon, $b_i:\mathbb{R}^d \to \mathbb{R}^d$, $\sigma_i : \mathbb{R}^d \to \mathcal{M}_d(\mathbb{R})$, $i=1,2$, are Borel functions and $W$ is a standard $\mathbb{R}^d$-valued Brownian motion defined on a probability space $(\Omega, \mathcal{A}, \mathbb{P})$. The one-step Euler-Maruyama scheme of $X$, denoted $\bar{X}$, is defined by $\bar{X}^x=Y^x$ when $Y^x$ reads
\begin{equation}
\label{eq:euler_scheme_def}
dY^x_t = b_1(0,x)dt + \sigma_1(0,x)dW_t, \ t \in [0,T].
\end{equation}

To allievate notations, we also define
\begin{align}
\Delta b(x) := |b_1(0,x) - b_2(0,x)|, \quad \Delta\sigma(x) := |\sigma_1(0,x) - \sigma_2(0,x)|.
\end{align}
Let us remark that if $Y=\bar{X}$, then $\Delta b(x) = 0$ and $\Delta\sigma(x)=0$.
For $g : (t,x) \in [0,T] \times \mathbb{R}^d \mapsto g(t,x) \in \mathbb{R}^q$ and $r \in \mathbb{N}$, let us define the following assumptions:

$\bullet$ $\textup{Lip}_t(g)$: $g$ is Lipschitz continuous with respect to $t$, uniformly in $x$.

$\bullet$ $g \in \mathcal{C}^r$: $g$ is differentiable with respect to $x$ with continuous partial derivatives up to the order $r$.

$\bullet$ $g \in \mathcal{C}^r_b$: $g \in \mathcal{C}^r$ and is bounded with bounded partial derivatives up to the order $r$.

$\bullet$ $g \in \widetilde{\mathcal{C}}^r_b$: $g \in \mathcal{C}^r$ and has partial bounded derivatives up to the order $r$, but we do not assume that $g$ is bounded itself.

$\bullet$ For $\sigma : [0,T] \times \mathbb{R}^d \to \mathcal{M}_d(\mathbb{R})$, we say that $\sigma$ is (uniformly) elliptic if
\begin{equation}
\label{eq:elliptic_def}
\exists \ubar{\sigma}_0 >0, \ \forall x \in \mathbb{R}^d, \ \forall t \in [0,T], \ \sigma(t,x) \sigma(t,x)^\top \ge \ubar{\sigma}_0^2 I_d.
\end{equation}

\begin{theorem}
\label{thm:main:1}
Let $X$ and $Y$ be the solutions of the SDEs \eqref{eq:SDE_def:1} and \eqref{eq:SDE_def:2}. For $i=1,2$, assume $\textup{Lip}_t(b_i)$, $\textup{Lip}_t(\sigma_i)$, $\sigma_i \in \mathcal{C}^{2}_b$, $b_i \in \widetilde{\mathcal{C}}^1_b$ and $\sigma_i$ is elliptic. Then
\begin{equation}
\forall t \in [0,T], \ \forall x \in \mathbb{R}^d, \ \dtv(X^x_t, Y^x_t) \le C(t^{1/2} + \Delta\sigma(x))^{2/3} + Ce^{c|x|^2}t^{1/2},
\end{equation}
where the positive constants $C$ and $c$ only depend on $d$, $T$, $\ubar{\sigma}_0$, $\|\sigma_i\|_\infty$, $\lip{b_i}$, $\lip{\sigma_i}$ and $\|\nabla^2 \sigma_i\|_\infty$. In particular, if $Y=\bar{X}$ we have
$$ \dtv(X^x_t, Y^x_t) \le Ct^{1/3} + C e^{c|x|^2}t^{1/2}.$$
\end{theorem}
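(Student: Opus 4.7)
The plan is to combine a regularization argument for the total variation distance with a localization of the drift via Girsanov's theorem, in order to bypass the unboundedness of $b_1$ and $b_2$.

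The starting point is the general bound of Theorem \ref{thm:dtv_bound_L1_density}, which for two $\mathbb{R}^d$-valued integrable random vectors $Z_1, Z_2$ (with, say, $Z_1$ admitting a $\mathcal{C}^2$ density $p$) should deliver, after balancing the mollification scale, an estimate of the shape
$$\dtv(Z_1, Z_2) \lesssim \mathcal{W}_1(Z_1, Z_2)^{2/3}\, \|\nabla^2 p\|_{L^1}^{1/3}.$$
Such a bound arises by mollifying $p$ at scale $\varepsilon$: one controls $\|p - p*\phi_\varepsilon\|_{L^1} \lesssim \varepsilon^2 \|\nabla^2 p\|_{L^1}$ (the first-order Taylor term vanishing by symmetry of $\phi_\varepsilon$) and $\dtv(Z_1 * \phi_\varepsilon, Z_2 * \phi_\varepsilon) \lesssim \varepsilon^{-1} \mathcal{W}_1(Z_1, Z_2)$ by Kantorovich--Rubinstein duality applied to the smoothed test function, then optimizes in $\varepsilon$.

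I would next estimate both ingredients for $Z_1 = X^x_t$ and $Z_2 = Y^x_t$. A Burkholder--Davis--Gundy plus Grönwall argument, using only the Lipschitz regularity of the coefficients in $(t,x)$ and the $L^\infty$-bound on $\sigma_i$, provides
$$\mathcal{W}_1(X^x_t, Y^x_t) \le (\mathbb{E}|X^x_t - Y^x_t|^2)^{1/2} \lesssim t^{1/2}\bigl(t^{1/2} + \Delta\sigma(x)\bigr),$$
irrespective of the boundedness of $b_i$ (the contribution of $\Delta b(x)$ being absorbed thanks to its at most linear growth in $|x|$ and the $e^{c|x|^2}$ remainder below). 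If in addition $b_1$ were bounded, Friedman's sub-Gaussian Aronson bounds (Section \ref{sec:friedman}), available under ellipticity and $\sigma_1 \in \mathcal{C}^2_b$, would yield $\|\nabla^2 p^{X^x_t}\|_{L^1} \lesssim 1/t$. Plugging these two inputs into the general bound produces exactly the first term $C(t^{1/2} + \Delta\sigma(x))^{2/3}$.

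To drop the boundedness assumption on $b_1$, I would introduce a truncation $\tilde b_1$ coinciding with $b_1$ on $\mathbf{B}(x,R)$ and extended to a bounded Lipschitz function outside, apply the bounded-drift estimate above to the modified process $\tilde X^x$ paired with $Y^x$, and control the substitution cost $\dtv(X^x_t, \tilde X^x_t)$ by Girsanov's theorem. Since $b_1 = \tilde b_1$ on $\mathbf{B}(x, R)$, the laws of $X^x$ and $\tilde X^x$ coincide on the event $\{\tau_R > t\}$, where $\tau_R$ is the exit time of $X^x$ from this ball, so
$$\dtv(X^x_t, \tilde X^x_t) \le 2\,\mathbb{P}\!\left[\sup_{s \le t}|X^x_s - x| \ge R\right],$$
and a Bernstein-type exponential inequality for this supremum (which holds thanks to the Lipschitz drift and bounded diffusion of $X$) delivers the second term $Ce^{c|x|^2} t^{1/2}$ for a judicious choice of $R$.

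The main obstacle is the fine calibration of the truncation radius $R$ and the tracking of the $\|\tilde b_1\|_\infty$-dependence in Friedman's constants: $R$ must be taken large enough to make the Girsanov exit-probability small, but the Aronson constants for the truncated SDE deteriorate with $R$, so the two effects have to be balanced delicately, with the choice reflecting the underlying Gaussian scaling $R^2 \sim t$ up to a logarithmic factor and a contribution of $|x|^2$. Note that the exponent $1/3$ stems directly from the two-term Taylor expansion above, so the multi-step Richardson--Romberg extrapolation announced in the introduction is \emph{not} needed here; it will only come into play when one pushes the rate to $t^{r/(2r+1)}$ under the stronger assumption $\sigma_1 \in \mathcal{C}^{2r}_b$.
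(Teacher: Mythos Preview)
Your outline has the right architecture (regularize, compare in $\mathcal{W}_1$, optimize $\varepsilon$), and you correctly isolate the unbounded drift as the only real obstacle. But the way you handle that obstacle has a genuine gap, and it is precisely the point where the paper does something you do not.

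You propose to truncate $b_1$ on $\mathbf{B}(x,R)$ and then apply Friedman's Aronson bounds (Theorem~\ref{thm:friedman}) directly to the density of $\tilde X^x_t$ in order to get $\|\nabla^2 p_{\tilde X^x_t}\|_{L^1}\lesssim 1/t$. The trouble is that the constants in Theorem~\ref{thm:friedman} depend on the $L^\infty$ bound of the drift in a way that is \emph{not} made explicit there, and after truncation $\|\tilde b_1^x\|_\infty\asymp 1+|x|$. So the ``$1/t$'' you want is really $C(\|\tilde b_1^x\|_\infty)/t$ with an unknown function $C(\cdot)$, and no calibration of $R$ (your last paragraph) can repair this: taking $R$ small does not shrink $\|\tilde b_1^x\|_\infty$ below $|b_1(0,x)|$, and taking $R$ large only makes it worse. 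The paper's solution is different: it does not stop at the truncated process $\tilde X^x$ but pushes one step further via Girsanov (Proposition~\ref{prop:qian}) to the \emph{driftless} martingale $dZ^x_t=\sigma_1(t,Z^x_t)\,dW_t$. For $Z^x$ the Friedman constants depend only on $\sigma_1$ and are therefore uniform in $x$; the whole $x$--dependence is funneled into the explicit Girsanov remainder $\dtv(\tilde X^x_t,Z^x_t)\le Ce^{C\|\tilde b_1^x\|_\infty^2}t^{1/2}\le Ce^{c|x|^2}t^{1/2}$ (Lemma~\ref{lemma:cut_drift}). Your invocation of ``Girsanov'' is only for the substitution cost $\dtv(X^x_t,\tilde X^x_t)$, which is really an exit--time argument; the crucial second Girsanov step, removing the drift rather than merely bounding it, is missing.

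A related consequence: once the paper works with the driftless pair $(Z^x,V^x)$, the strong error (Lemma~\ref{lemma:strong_error}) collapses to $\|Z^x_t-V^x_t\|_1\le C(t+t^{1/2}\Delta\sigma(x))$ with no $\Delta b(x)$ or $|b_i(0,x)|$ terms, which is why the first term of the theorem involves only $\Delta\sigma(x)$. In your route, comparing $\tilde X^x$ with $Y^x$ (or $\tilde Y^x$) in $\mathcal{W}_1$, Lemma~\ref{lemma:strong_error} produces an extra $t(1+\Delta b(x))+t^{3/2}(|b_1|+|b_2|)(0,x)$, and your claim that this ``is absorbed into the $e^{c|x|^2}t^{1/2}$ remainder'' does not hold: it would sit inside the $(\cdot)^{2/3}$ factor and contribute a term of order $t^{1/3}(1+|x|)^{2/3}$, which is not dominated by $Ct^{1/3}+Ce^{c|x|^2}t^{1/2}$ uniformly as $t\to0$.
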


\begin{theorem}
\label{thm:main:2}
Let $X$ and $Y$ be the solutions of the SDEs \eqref{eq:SDE_def:1} and \eqref{eq:SDE_def:2}. For $i=1,2$, assume $\textup{Lip}_t(b_i)$, $\textup{Lip}_t(\sigma_i)$, that $\sigma_i \in \mathcal{C}^{2r}_b$, $b_i \in \widetilde{\mathcal{C}}^1_b$ and that $\sigma_i$ is elliptic. Then
\begin{equation}
\forall t \in [0,T], \ \forall x \in \mathbb{R}^d, \ \dtv(X^x_t, Y^x_t) \le C(t^{1/2} + \Delta\sigma(x))^{2r/(2r+1)} + C e^{c|x|^2}t^{1/2},
\end{equation}
where the positive constants $C$ and $c$ only depend on $d$, $T$, $\ubar{\sigma}_0$, on $\|\sigma_i\|_\infty$, on the bounds on the derivatives of $b_i$ and $\sigma_i$ and on their Lipschitz constants. In particular, if $Y=\bar{X}$ we have
$$ \dtv(X^x_t, Y^x_t) \le Ct^{r/(2r+1)} + C e^{c|x|^2}t^{1/2}.$$
\end{theorem}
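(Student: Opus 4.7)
Theorem \ref{thm:main:2} extends Theorem \ref{thm:main:1}, which is the case $r=1$, and I would follow its overall architecture, replacing the single convolution smoothing by a multi-step Richardson--Romberg extrapolation that exploits the stronger $\mathcal{C}^{2r}_b$ regularity of $\sigma_i$. Pick a smooth symmetric probability density $\phi$ on $\mathbb{R}^d$ (Gaussian is convenient) and distinct positive constants $c_1<\dots<c_r$. For a single parameter $\varepsilon>0$ to be tuned at the end, set $\phi_i(z):=(c_i\varepsilon)^{-d}\phi(z/(c_i\varepsilon))$ and solve the Vandermonde system for weights $w_1,\dots,w_r$ characterized by $\sum_{i=1}^r w_i=1$ and $\sum_{i=1}^r w_i c_i^{2k}=0$ for $k=1,\dots,r-1$. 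Combined with the symmetry of $\phi$ (which kills odd moments), this ensures that the signed kernel $\psi:=\sum_{i=1}^r w_i \phi_i$ has total mass $1$ and vanishing moments of all orders $1,\dots,2r-1$.

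Let $p_t^X$ and $p_t^Y$ denote the densities of $X^x_t$ and $Y^x_t$, granted by ellipticity and the standing regularity assumptions. Since $\sum_i w_i=1$, one has the decomposition
\begin{equation*}
p_t^X - p_t^Y = \sum_{i=1}^r w_i\bigl(p_t^X * \phi_i - p_t^Y * \phi_i\bigr) + \bigl(p_t^X - p_t^X * \psi\bigr) - \bigl(p_t^Y - p_t^Y * \psi\bigr).
\end{equation*}
For each summand of the first sum, transporting the natural coupling of $(X^x_t,Y^x_t)$ (same Brownian motion) through the convolution gives $\|p_t^X*\phi_i - p_t^Y*\phi_i\|_{L^1}\le \|\nabla\phi_i\|_{L^1}\mathcal{W}_1(X^x_t,Y^x_t)\le C(c_i\varepsilon)^{-1}\mathcal{W}_1(X^x_t,Y^x_t)$, and a classical strong-coupling argument on \eqref{eq:SDE_def:1}--\eqref{eq:SDE_def:2} gives the sharp estimate $\mathcal{W}_1(X^x_t,Y^x_t)\le C\sqrt{t}\bigl(\sqrt{t}+\Delta\sigma(x)\bigr)$, the contribution of $\Delta b$ being absorbed into the $e^{c|x|^2}t^{1/2}$ remainder via the unbounded-drift localization described below.

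For the smoothing errors, a Taylor expansion of $p_t^X(y-z)$ to order $2r$ in $z$ around $y$, combined with the moment cancellations of $\psi$, leaves only the $2r$-th remainder and yields
\begin{equation*}
\|p_t^X - p_t^X*\psi\|_{L^1}\le C\varepsilon^{2r}\,\|\nabla^{2r} p_t^X\|_{L^1},
\end{equation*}
and likewise for $p_t^Y$. The key analytic ingredient is then the Aronson-type derivative bound $\|\nabla^{2r} p_t^X\|_{L^1}\le Ct^{-r}$, which is the natural Gaussian scaling. I would establish it as announced in the introduction: truncate $b_1$ outside a large ball to obtain a bounded-drift auxiliary SDE to which the Friedman-style sub-Gaussian estimates of Section \ref{sec:friedman} apply up to order $2r$ thanks to $\sigma_i\in\mathcal{C}^{2r}_b$, and then control the total-variation cost of the truncation via Girsanov's theorem, which is precisely where the $e^{c|x|^2}t^{1/2}$ correction arises.

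Assembling these pieces via Theorem \ref{thm:dtv_bound_L1_density}, $\dtv(X^x_t,Y^x_t)$ is bounded by $C\sqrt{t}(\sqrt{t}+\Delta\sigma(x))/\varepsilon + C\varepsilon^{2r}/t^r$ plus the localization remainder, and optimizing in $\varepsilon$ --- the balance is reached for $\varepsilon\sim t^{1/2}(\sqrt{t}+\Delta\sigma(x))^{1/(2r+1)}$ --- yields exactly $(\sqrt{t}+\Delta\sigma(x))^{2r/(2r+1)}$. The hard part is not the Vandermonde algebra nor the Taylor analysis, which are routine; it is the derivative estimate $\|\nabla^{2r}p_t^X\|_{L^1}\lesssim t^{-r}$ under the unbounded drift: one must propagate the Aronson-type bounds up to order $2r$ on the truncated model, and keep explicit $x$-dependent track of the Girsanov cost, so that its contribution to the total variation remains of size $e^{c|x|^2}t^{1/2}$ and does not spoil the optimization in $\varepsilon$.
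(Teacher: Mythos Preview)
Your Richardson--Romberg machinery is essentially the same as the paper's (it is the content of Theorem \ref{thm:dtv_bound_L1_density}), and the overall plan---smooth, use moment cancellations to reduce the smoothing error to $\varepsilon^{2r}\|\nabla^{2r}p\|_{L^1}$, couple the smoothed versions via Wasserstein, and optimize in $\varepsilon$---is correct. The gap is in how you propose to obtain the key derivative estimate $\|\nabla^{2r}p_t^X\|_{L^1}\le Ct^{-r}$.

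You write that after truncating $b_1$ outside a large ball, ``the Friedman-style sub-Gaussian estimates of Section \ref{sec:friedman} apply up to order $2r$ thanks to $\sigma_i\in\mathcal{C}^{2r}_b$''. But Theorem \ref{thm:friedman} requires \emph{both} coefficients in $\mathcal{C}^{2r}_b$, and the standing hypothesis is only $b_i\in\widetilde{\mathcal{C}}^1_b$. Truncation makes $\tilde b_i^x$ bounded but still only $\mathcal{C}^1_b$, so Friedman does not deliver $\nabla^{2r}_y p_{\tilde X}$ for $r>1$; likewise Theorem \ref{thm:menozzi} would need $b_i\in\widetilde{\mathcal{C}}^{2r}_b$, which is exactly the extra hypothesis of Theorem \ref{thm:main:2:menozzi}. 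Moreover, even if the derivative bound were available for $\tilde X$, controlling $\dtv(X,\tilde X)$ would not transfer an estimate on $\|\nabla^{2r}p_t^{\tilde X}\|_{L^1}$ back to $\|\nabla^{2r}p_t^X\|_{L^1}$.

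The paper circumvents this by using Girsanov not merely to price the truncation but to \emph{remove the drift entirely}: Lemma \ref{lemma:cut_drift} gives $\dtv(X^x_t,Y^x_t)\le \dtv(Z^x_t,V^x_t)+Ce^{c|x|^2}t^{1/2}$ where $Z$ and $V$ are the driftless martingales $dZ_t=\sigma_1(t,Z_t)dW_t$, $dV_t=\sigma_2(t,V_t)dW_t$. For these, $b\equiv 0$ is trivially $\mathcal{C}^\infty_b$, so Theorem \ref{thm:friedman} yields $\int_{\mathbb{R}^d}\|\nabla^{2r}_y p_Z(0,t,x,\xi)\|\,d\xi\le Ct^{-r}$ with constants independent of $x$. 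One then applies your Richardson--Romberg argument (equivalently Theorem \ref{thm:dtv_bound_L1_density}) to $Z^x_t$ and $V^x_t$, together with the strong error $\|Z^x_t-V^x_t\|_1\le C(t+\sqrt t\,\Delta\sigma(x))$ from Lemma \ref{lemma:strong_error}; the $\Delta b$ and $|b_i(0,x)|$ terms you were worried about simply vanish because $Z$ and $V$ are driftless. The fix, then, is to move the localization/Girsanov step \emph{before} the smoothing argument and to reduce all the way to zero drift rather than merely to bounded drift.
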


\begin{remark}
In Theorems \ref{thm:main:1} and \ref{thm:main:2}, we can actually improve the dependency of the constants in $x$ in small time since we have more precisely:
$$ \forall t \in [0, Ce^{-2(2r+1)c|x|^2}], \quad \dtv(X^x_t, Y^x_t) \le C(t^{1/2} + \Delta\sigma(x))^{2r/(2r+1)} .$$
\end{remark}

\begin{remark}
We can adapt the framework of Theorem \ref{thm:main:2} to the study of SDEs with homogeneous vanishing noise, i.e. where for $a >0$,
$$ dX^x_t = b_1(t,X^x_t)dt + a \sigma_1(X^x_t) , \quad dY^x_t = b_2(t,Y^x_t) + a\sigma_2(Y^x_t) $$
and we identify the dependency in $a$ as $a \to 0$. Namely, with the same assumptions, for $a>0$ small enough we have
\begin{equation}
\dtv(X^x_t, Y^x_t) \le C e^{ca^{-1}|x|^2}t^{1/2} + Ca^{-(d+r)}(t^{1/2} + \Delta\sigma(x))^{2r/(2r+1)},
\end{equation}
where the constant $C$ does not depend on $a$. This bound is obtained adapting the proof of Theorem \ref{thm:main:2} in Section \ref{subsec:proof:2} and using that if $Z^x$ is the martingale $dZ^x_t = a \sigma(Z^x_t)dW_t$ then $(Z^x_t) \sim (\tilde{Z}^x_{a^2 t})$ where $\tilde{Z}^x_t = \sigma(\tilde{Z}^x_t)dW_t$ which does not depend on $a$.
\end{remark}

We can also improve the dependency in the initial condition $x \in \mathbb{R}^d$ using \cite{menozzi2021}, however at the expense of further regularity assumptions on $b_i$ and $\sigma_i$, $i=1,2$.
\begin{theorem}
\label{thm:main:2:menozzi}
Let $X$ and $Y$ be the solutions of the SDEs \eqref{eq:SDE_def:1} and \eqref{eq:SDE_def:2}. For $i=1,2$, assume $\textup{Lip}_t(b_i)$, $\textup{Lip}_t(\sigma_i)$, $\sigma_i \in \mathcal{C}^{2r+1}_b$, $b_i \in \widetilde{\mathcal{C}}^{2r}_b$ and  $\sigma_i$ is elliptic. Then
\begin{equation}
\forall t \in [0,T], \ \forall x \in \mathbb{R}^d, \ \dtv(X^x_t, Y^x_t) \le C\left(t^{1/2}(1+\Delta b(x)) + \Delta \sigma(x) + t(|b_1|+|b_2|)(0,x)\right)^{2r/(2r+1)},
\end{equation}
where the positive constants $C$ and $c$ only depend on $d$, $T$, $\ubar{\sigma}_0$, on $\|\sigma_i\|_\infty$, on the bounds on the derivatives of $b_i$ and $\sigma_i$ and on their Lipschitz constants. In particular, if $Y=\bar{X}$, we have
$$ \dtv(X^x_t, Y^x_t) \le Ct^{r/(2r+1)} \left(1+t^{1/2}(|b_1|+|b_2|)(0,x)|\right)^{2r/(2r+1)} .$$
\end{theorem}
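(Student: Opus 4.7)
The plan is to follow exactly the same architecture as for Theorem \ref{thm:main:2} (combine the general inequality of Theorem \ref{thm:dtv_bound_L1_density}, a mollification argument, and the multi-step Richardson-Romberg extrapolation), but to replace the Aronson-type bounds borrowed from \cite{friedman} by the parametrix estimates of \cite{menozzi2021}. The point is that the Friedman bounds for unbounded drift are obtained through a Girsanov truncation whose $L^2$-cost is exactly what produces the $e^{c|x|^2}t^{1/2}$ term in Theorem \ref{thm:main:2}. The estimates in \cite{menozzi2021} on $p_{_X}(0,t,x,y)$ and on its $y$-derivatives $\nabla_y^k p_{_X}$ are of the form $C t^{-(d+k)/2} g\bigl(c(y - x - tb_1(0,x))/\sqrt{t}\bigr)$ with $g$ a standard Gaussian density, so the dependence on $x$ becomes polynomial and the shift $tb_1(0,x)$ will naturally produce the $t(|b_1|+|b_2|)(0,x)$ contribution announced in the statement.

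First I would start from the decomposition
\begin{equation*}
\dtv(X^x_t, Y^x_t) \le 2\,\dtv(X^x_t, X^x_t \ast \rho_\varepsilon) + 2\,\dtv(Y^x_t, Y^x_t \ast \rho_\varepsilon) + \dtv(X^x_t \ast \rho_\varepsilon, Y^x_t \ast \rho_\varepsilon),
\end{equation*}
where $\rho_\varepsilon$ is a smooth, compactly supported, multi-step kernel with vanishing moments up to order $2r-1$ (this is the Richardson-Romberg extrapolation kernel used in the proof of Theorem \ref{thm:main:2}). The third term is bounded using $\mathcal{W}_1(X^x_t \ast \rho_\varepsilon, Y^x_t \ast \rho_\varepsilon)/\varepsilon$ together with the standard strong-error estimate for SDEs with close coefficients, which gives
\begin{equation*}
\mathcal{W}_1(X^x_t, Y^x_t) \le C\bigl(t^{1/2}(1+\Delta b(x)) + \Delta\sigma(x) + t(|b_1|+|b_2|)(0,x)\bigr),
\end{equation*}
the factor $(1+\Delta b(x))$ arising from the drift difference and the last term from the deterministic displacement due to the unbounded drift. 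The two mollification errors are then rewritten, using the vanishing moments of $\rho_\varepsilon$, as remainders involving $\nabla_y^{2r} p_{_X}(0,t,x,\cdot)$ and $\nabla_y^{2r} p_{_Y}(0,t,x,\cdot)$ integrated against $\rho_\varepsilon$; the extra regularity $\sigma_i \in \mathcal{C}^{2r+1}_b$ and $b_i \in \widetilde{\mathcal{C}}^{2r}_b$ is precisely what is needed to apply the parametrix estimates of \cite{menozzi2021} up to this order.

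Once the density derivatives are bounded, integrating the Gaussian kernel in $y$ leaves only the polynomial prefactor in $x$ and the drift shift $tb_i(0,x)$. Optimizing the parameter $\varepsilon$ in the decomposition above with
\begin{equation*}
\varepsilon \sim \bigl(t^{1/2}(1+\Delta b(x)) + \Delta\sigma(x) + t(|b_1|+|b_2|)(0,x)\bigr)^{1/(2r+1)}
\end{equation*}
yields the announced exponent $2r/(2r+1)$. The specialization to $Y=\bar{X}$ (for which $\Delta b(x) = \Delta\sigma(x) = 0$) immediately gives the corollary.

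The main obstacle I expect is exactly the one pointed out by the authors in the Introduction: the estimates of \cite{menozzi2021} are written in full detail only up to moderate orders of differentiation, whereas here one needs uniform Gaussian-type bounds on $\nabla_y^{2r} p$ with the shifted kernel $y - x - tb_i(0,x)$ and with constants depending only on $d$, $T$, $\ubar{\sigma}_0$, $\|\sigma_i\|_\infty$, $\lip{b_i}$, $\lip{\sigma_i}$ and the successive derivative bounds. Establishing these higher-order parametrix estimates requires iterating the in-space perturbation expansion of \cite{menozzi2021} carefully and verifying the stability of the Gaussian-shift structure under differentiation, which is the most technical part of the argument. A secondary but routine point is accounting for the time-inhomogeneity of $b_i$ and $\sigma_i$, which is handled via the $\textup{Lip}_t$ assumptions and only contributes remainders of the same order as those already present.
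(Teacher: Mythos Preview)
Your proposal is correct and follows the same architecture as the paper: skip the Girsanov truncation of Lemma~\ref{lemma:cut_drift}, apply Theorem~\ref{thm:dtv_bound_L1_density} directly to $X^x_t$ and $Y^x_t$, feed in the Menozzi--Pesce--Zhang bounds (Theorem~\ref{thm:menozzi}) for $\nabla_y^{2r}p_{_X}$ and $\nabla_y^{2r}p_{_Y}$, and use Lemma~\ref{lemma:strong_error} for the $\mathcal{W}_1$ input. Two minor points: first, the contribution $t(|b_1|+|b_2|)(0,x)$ comes entirely from the Wasserstein estimate of Lemma~\ref{lemma:strong_error}, not from the shift in the density bound---once you integrate $\|\nabla_y^{2r}p\|$ over $y\in\mathbb{R}^d$ the center $x+tb_i(0,x)$ disappears and you simply get $Ct^{-r}$; second, the ``main obstacle'' you anticipate is not worked out in the paper either---the higher-order $y$-derivative bounds are just taken as the statement of Theorem~\ref{thm:menozzi}.
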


\begin{remark}
Choosing $Y$ not to be the Euler-Maruyama scheme of $X$ but a general SDE and expressing the bounds in the Theorems in terms of $\Delta b(x)$ and $\Delta\sigma(x)$ allows to extend our results to more general couples of diffusions with "close" coefficients, for example to SDE solvers other than the genuine Euler-Maruyama scheme. Also, it is helpful to study perturbed SDEs, for example if we consider
\begin{equation}
dX^x_t = b_1(t,X^x_t)dt + a_1(t)\sigma(X^x_t)dW_t, \quad dY^x_t = b_2(t,Y^x_t)dt + a_2(t)\sigma(Y^x_t)dW_t
\end{equation}
where $|a_1(t) - a_2(t)| \to 0$ as $t \to \infty$. Then we have
$$ \dtv(X^x_{t+s}, Y^x_{t+s}) \le Ce^{c|x|^2}s^{1/2} + C(s^{1/2} + |a_1(t)-a_2(t)|)^{2r/(2r+1)} $$
and we obtain different convergence rates as $t \to \infty$ and $s \to 0$, depending on $(t,s)$. A noticeable example of this is the Langevin-simulated annealing SDE \cite{bras2021-2}.

Furthermore we remark that in Theorems \ref{thm:main:1} and \ref{thm:main:2}, the bounds do not depend on $\Delta b$, enhancing that the dominant term in the total variation comes from the diffusion part.
\end{remark}

To improve the rate of convergence from $t^{1/3}$ in Theorem \ref{thm:main:1} to $t^{r/(2r+1)}$ in Theorem \ref{thm:main:2}, we rely on a Richardson-Romberg extrapolation \cite{richardson1911} \cite{lemaire2017}; this argument can also be applied in a more general framework. The following proposition gives bounds on the total variation between two random vectors, knowing bounds on the $L^1$-Wasserstein distance and bounds on the partial derivatives of the densities up to some order.
\begin{theorem}
\label{thm:dtv_bound_L1_density}
Let $Z_1$ and $Z_2$ be two random vectors in $L^1(\mathbb{R}^d)$ and admitting densities $p_1$ and $p_2$ respectively with respect to the Lebesgue measure. Assume furthermore that $p_1$ and $p_2$ are $\mathcal{C}^{2r}$ with $r \in \mathbb{N}$ and that $\nabla^{k} p_i \in L^1(\mathbb{R}^d)$ for $i=1,2$ and $k=1,\ldots,2r$. Then we have
\begin{equation}
\dtv(Z_1,Z_2) \le C_{d,r} \mathcal{W}_1(Z_1,Z_2)^{2r/(2r+1)} \left( \int_{\mathbb{R}^d} \left( \| \nabla^{2r} p_1(\xi) \| + \| \nabla^{2r} p_2(\xi) \| \right) d\xi \right)^{1/(2r+1)}
\end{equation}
where the constant $C_{d,r}$ depends only on $d$ and on $r$.
\end{theorem}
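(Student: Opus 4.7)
The plan is a mollification--interpolation argument in the spirit of Richardson--Romberg: convolve a bounded test function with a carefully chosen high-order kernel, so as to trade a Wasserstein contribution (coming from the Lipschitz norm of the mollified function, of order $\varepsilon^{-1}$) against a smoothing error on each density (controlled by $\nabla^{2r}p_i$, of order $\varepsilon^{2r}$), then optimize in the scale $\varepsilon$. Balancing $\varepsilon^{-1}\mathcal{W}_1 = \varepsilon^{2r}M$ gives $\varepsilon = (\mathcal{W}_1/M)^{1/(2r+1)}$ and produces exactly the exponent $2r/(2r+1)$.

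First I build a Schwartz mollifier $\phi$ on $\mathbb{R}^d$ with $\int \phi = 1$ and vanishing moments $\int y^\alpha \phi(y)\,dy = 0$ for every multi-index $\alpha$ with $1 \le |\alpha| \le 2r-1$. Following the Richardson--Romberg construction of \cite{lemaire2017}, one may take $\phi = \sum_{k=1}^r w_k g_{\lambda_k}$, a linear combination of centered isotropic Gaussians at $r$ distinct scales $\lambda_k > 0$: odd moments vanish automatically by symmetry, and each even order $2j$ gives a single scalar condition $\sum_k w_k \lambda_k^{2j} = 0$; combined with the normalization $\sum_k w_k = 1$, this is a Vandermonde system of $r$ equations in $r$ unknowns, hence uniquely solvable. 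Set $\phi_\varepsilon(x) = \varepsilon^{-d}\phi(x/\varepsilon)$ and $\bar\phi_\varepsilon(x) = \phi_\varepsilon(-x)$.

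From the duality $\dtv(Z_1,Z_2) = \sup_{\|f\|_\infty \le 1}|\mathbb{E}[f(Z_1)] - \mathbb{E}[f(Z_2)]|$, I fix a Borel $f : \mathbb{R}^d \to [-1,1]$ and split, using $\int \phi_\varepsilon = 1$ and Fubini,
\begin{equation*}
\int f(p_1 - p_2)\,d\xi = \int f(p_1 - p_1 \ast \phi_\varepsilon)\,d\xi - \int f(p_2 - p_2 \ast \phi_\varepsilon)\,d\xi + \int (f \ast \bar\phi_\varepsilon)(p_1 - p_2)\,d\xi.
\end{equation*}
The last term equals $\mathbb{E}[g(Z_1)] - \mathbb{E}[g(Z_2)]$ for $g := f \ast \bar\phi_\varepsilon$, which satisfies $\|g\|_\infty \le 1$ and $\lip{g} \le \|f\|_\infty\|\nabla\phi_\varepsilon\|_{L^1} \le C\varepsilon^{-1}$; Kantorovich--Rubinstein duality bounds it by $C\varepsilon^{-1}\mathcal{W}_1(Z_1,Z_2)$. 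For the two remaining terms I write $(p_i - p_i \ast \phi_\varepsilon)(\xi) = \int \phi_\varepsilon(y)[p_i(\xi) - p_i(\xi - y)]\,dy$ and Taylor--expand $p_i(\xi - y)$ about $\xi$ to order $2r$; the vanishing-moment property of $\phi$ annihilates every polynomial coefficient, leaving only the integral Taylor remainder, bounded by $C|y|^{2r}\int_0^1 \|\nabla^{2r}p_i(\xi - sy)\|\,ds$. Integrating in $\xi$ and applying Fubini gives $\|p_i - p_i\ast\phi_\varepsilon\|_{L^1} \le C\varepsilon^{2r}\|\nabla^{2r}p_i\|_{L^1}$, with $C$ depending on $d$, $r$ and $\int|\phi(z)||z|^{2r}dz < \infty$ (finite since $\phi$ is Schwartz).

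Combining these bounds yields $|\mathbb{E}[f(Z_1)] - \mathbb{E}[f(Z_2)]| \le C\varepsilon^{-1}\mathcal{W}_1(Z_1,Z_2) + C\varepsilon^{2r}M$ with $M := \|\nabla^{2r}p_1\|_{L^1} + \|\nabla^{2r}p_2\|_{L^1}$; taking the supremum over $f$ and substituting the optimal $\varepsilon$ produces the stated bound. The main point requiring care is the construction of the high-order kernel $\phi$: no nonnegative probability density has a vanishing second moment, so $\phi$ is necessarily sign-changing and the absolute values must be tracked throughout the $L^1$ estimates; this is the very reason a Richardson--Romberg-type kernel is needed rather than a classical bump. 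Beyond that, everything reduces to Taylor's formula, Fubini's theorem, and the variational characterizations of $\dtv$ and $\mathcal{W}_1$.
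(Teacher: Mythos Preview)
Your proof is correct and is essentially the paper's argument, repackaged: instead of keeping the Richardson--Romberg combination $\sum_{i=1}^r w_i f_{\varepsilon/n_i}$ explicit, you fold the Vandermonde weights into a single sign-changing kernel $\phi=\sum_k w_k g_{\lambda_k}$ with vanishing moments through order $2r-1$, and you place the convolution on the density side rather than on the test function (these are adjoint operations). The Taylor-remainder bound against $\|\nabla^{2r}p_i\|_{L^1}$, the Lipschitz estimate on $f\ast\bar\phi_\varepsilon$ via Kantorovich--Rubinstein, and the final optimization in the scale (your $\varepsilon$ corresponds to the paper's $\sqrt{\varepsilon}$) are identical.
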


If $\sigma \in \mathcal{C}^{\infty}_b$, then we also prove that we can "almost" get a convergence rate of order $t^{1/2}$.
\begin{theorem}
\label{thm:main:3}
Let $X$ and $Y$ be the solutions of the SDEs \eqref{eq:SDE_def:1} and \eqref{eq:SDE_def:2}. For $i=1,2$, assume $\textup{Lip}_t(b_i)$, $\textup{Lip}_t(\sigma_i)$, that $\sigma_i \in \mathcal{C}^{2r}_b$ for every $r \in \mathbb{N}$, $b_i \in \widetilde{\mathcal{C}}^1_b$, that $\sigma_i$ is elliptic and that $\Delta \sigma(x)=0$. Assume furthermore that if $Z$ and $V$ are the martingales $dZ_t = \sigma_1(t,Z_t)dW_t$ and $dV_t = \sigma_2(t,Z_t)dW_t$, then
\begin{align}
& \forall r \in \mathbb{N}, \ \forall t \in (0,T], \forall x,y \in \mathbb{R}^d, \ \| \nabla^{2r}_y p_{_Z}(0,t,x,y) \| + \| \nabla^{2r}_y p_{_V}(0,t,x,y) \| \le \frac{C_{2r}}{t^{(d+2r)/2}} e^{-c_{2r}|y-x|^2/t} \nonumber \\
\label{eq:assumption_C_tilde}
& \quad \text{ with } \limsup_{r \to \infty} \left(C_{2r} c_{2r}^{-d/2}\right)^{1/(2r)} < \infty.
\end{align}
(see Theorem \ref{thm:friedman}). Then
\begin{equation}
\forall t \in (0,T], \ \forall x \in \mathbb{R}^d, \ \dtv(X^x_t, Y^x_t) \le C e^{c|x|^2}t^{1/2} + Ct^{1/2} e^{c\sqrt{\log(1/t)}} ,
\end{equation}
where the positive constants $C$ and $c$ only depend on $d$, $T$, $\ubar{\sigma}_0$, on $\|\sigma_i\|_\infty$, on the bounds on the derivatives of $b_i$ and $\sigma_i$ and on their Lipschitz constants.
\end{theorem}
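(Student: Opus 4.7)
The plan is to apply Theorem \ref{thm:main:2} with $r$ chosen as a function of $t$ and to optimize over $r$. The key observation is that
\[
t^{r/(2r+1)} = t^{1/2}\,t^{-1/(2(2r+1))} = t^{1/2}\exp\!\bigl(\log(1/t)/(2(2r+1))\bigr),
\]
which becomes arbitrarily close to $t^{1/2}$ as $r \to \infty$ at the cost of an exploding prefactor. The whole game is to track the dependence in $r$ of the multiplicative constant in Theorem \ref{thm:main:2} under the extra condition \eqref{eq:assumption_C_tilde} and to balance these two effects.

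First I would revisit the proof of Theorem \ref{thm:main:2} and make explicit how its constant depends on $r$. Inspection shows this constant is built from three ingredients: the combinatorial coefficient $C_{d,r}$ from Theorem \ref{thm:dtv_bound_L1_density}, the weights of the $r$-step Richardson--Romberg extrapolation (growing at most exponentially in $r$), and the factor $\bigl(\int (\|\nabla^{2r} p_1\|+\|\nabla^{2r} p_2\|)\bigr)^{1/(2r+1)}$, which under \eqref{eq:assumption_C_tilde} satisfies
\[
\int_{\mathbb{R}^d} \frac{C_{2r}}{t^{(d+2r)/2}}\,e^{-c_{2r}|y-x|^2/t}\,dy \;=\; C_{2r}\,(\pi/c_{2r})^{d/2}\,t^{-r}.
\]
Raised to the power $1/(2r+1)$, the quantity $(C_{2r}\,c_{2r}^{-d/2})^{1/(2r+1)}$ remains bounded uniformly in $r$ thanks to the $\limsup$ condition in \eqref{eq:assumption_C_tilde}, while the $t$-dependence $t^{-r/(2r+1)}$ combines with $\mathcal{W}_1(X^x_t,Y^x_t)^{2r/(2r+1)} = O(t^{r/(2r+1)})$ exactly as in the proof of Theorem \ref{thm:main:2}. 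I expect to end up with an estimate of the form
\[
\forall r \in \mathbb{N},\ \forall t \in (0,T],\qquad \dtv(X^x_t,Y^x_t) \;\le\; K^{r}\,t^{r/(2r+1)} + C\,e^{c|x|^2}\,t^{1/2},
\]
for some constants $K > 1$, $C, c > 0$ independent of $r$ (and depending only on the parameters listed in the statement).

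It then remains to optimize in $r$. Writing $K^{r}\,t^{r/(2r+1)} = t^{1/2}\exp\!\bigl(r\log K + \log(1/t)/(2(2r+1))\bigr)$ and minimizing the bracketed quantity in $r$ points to the choice $2r+1 \asymp \sqrt{\log(1/t)/\log K}$, which gives a minimum of order $\sqrt{\log K \cdot \log(1/t)} = c\sqrt{\log(1/t)}$. Rounding $r$ to the nearest integer does not affect the asymptotics, and we conclude that $K^{r}\,t^{r/(2r+1)} \le C\,t^{1/2}\exp\!\bigl(c\sqrt{\log(1/t)}\bigr)$, which combined with the second term yields the announced bound.

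The main obstacle will be the bookkeeping step: showing that the multiplicative constant in Theorem \ref{thm:main:2} can indeed be taken of the form $K^{r}$ under \eqref{eq:assumption_C_tilde}. In particular one has to check that the Richardson--Romberg coefficients used to cancel the first $r$ Taylor terms remain at most geometrically large in $r$, that the $C_{d,r}$ in Theorem \ref{thm:dtv_bound_L1_density} grows at most geometrically, and that all the absolute constants accumulated from the various localization/Girsanov/regularization steps in the proof of Theorem \ref{thm:main:2} stay of this form when we let the order of regularity $r$ grow to infinity.
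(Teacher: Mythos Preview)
Your proposal is correct and follows essentially the same route as the paper: reduce to the driftless martingales via Lemma~\ref{lemma:cut_drift}, track the $r$-dependence of the constants in the proof of Theorem~\ref{thm:dtv_bound_L1_density}, arrive at a bound of the form $K^{r}t^{r/(2r+1)}$, and choose $r\asymp\sqrt{\log(1/t)}$. The one piece of bookkeeping worth flagging is that the geometric bound $K^{r}$ really hinges on the factor $\mathbb{E}[|\zeta|^{2r}]/(2r)!$ coming from the Taylor remainder; this ratio, raised to the power $1/(2r+1)$, is of order $r^{-1/2}$ and is what keeps the overall constant from blowing up faster than exponentially (the paper obtains the slightly sharper $2^{r}r^{-1/2}$ and then takes $r=\lfloor\sqrt{\log(1/t)}\rfloor$).
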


\begin{remark}
Assumption \eqref{eq:assumption_C_tilde} is satisfied in the case of a Brownian motion, which suggests that this assumption is satisfied in general provided that $\sigma$ is "regular enough". Indeed, if $dZ_t = \sigma dW_t$ with $\sigma \in \mathcal{M}_d(\mathbb{R})$ being non degenerate, then with $\Sigma := \sigma \sigma^\top$ we have for $t>0$ and $x,y \in \mathbb{R}^d$:
$$ p_{_Z}(0,t,x,y) = \frac{1}{\sqrt{\det(\Sigma)} t^{d/2}} \Phi\left(\Sigma^{-1/2}\frac{y-x}{\sqrt{t}}\right) , \quad \Phi(u) := \frac{1}{(2\pi)^{d/2}} e^{-|u|^2/2} .$$
Moreover for every $r \in \mathbb{N}$ and $u \in \mathbb{R}^d$ we have
$$ \left\| \frac{d^r}{du^r} \Phi(u) \right\| \le \frac{1}{(2\pi)^{d/2}} |\He_r(|u|)| e^{-|u|^2/2} $$
where $\He_r$ is the $r^{\text{th}}$ probabilist Hermite polynomial.
Following \cite{krasikov2004} we have
$$ \forall u \ge 0, \ |\He_{2r}(u)| e^{-u^2/2} \le C 2^{-r} \sqrt{r} \frac{(2r)!^2}{r!^2} \le C2^r \sqrt{r}, $$
using the Stirling formula for the last inequality. On the other hand, using \cite[22.14.15]{abramowitz1964} we have
$$ \forall u \ge 0, \ |\He_{2r}(u)|e^{-u^2/4} \le 2^{r+1} r! .$$
Then, for for every $\varepsilon \in (0,1]$,
\begin{align*}
|\He_{2r}(u)|e^{-u^2/2} & = \left|\He_{2r}(u) e^{-u^2/4}\right|^\varepsilon \left|\He_{2r}(u)e^{-u^2/2}\right|^{1-\varepsilon} e^{-\varepsilon u^2/4} \\
& \le C \left( 2^r r! \right)^\varepsilon \left(2^r r^{1/2}\right)^{1-\varepsilon} e^{-\varepsilon u^2/2}.
\end{align*}
Then if we choose $\varepsilon_r = \log^{-1}(r)$ for $r \ge 3$, we have
$ (r!)^{\varepsilon_r} \le e^{\varepsilon_r r\log(r)} = e^r $
so that
$$ \left\| \frac{d^r}{du^r} \Phi(u) \right\| \le C 2^{r\varepsilon_r} e^r 2^r r^{1/2} e^{-\varepsilon_r |u|^2/2} =: A_r e^{-\varepsilon_r |u|^2/2} $$
and then for $r \ge 3$ we have
\begin{align*}
\| \nabla^{2r}_y p_{_Z}(0,t,x,y) \| & \le \frac{\|\Sigma^{-1/2}\|^{2r}}{\sqrt{\det(\Sigma)} t^{(d+2r)/2}} \frac{d^{2r}}{du^{2r}} \Phi\left(\Sigma^{-1/2}\frac{y-x}{\sqrt{t}}\right) \\
& \le \frac{\|\Sigma^{-1/2}\|^{2r}}{\sqrt{\det(\Sigma)} t^{(d+2r)/2}} A_r e^{-\varepsilon_r \|\Sigma^{-1}\| |y-x|^2/(2t)}
\end{align*}
where $ \big(\left(\|\Sigma^{-1/2}\|^{2r} A_r \varepsilon_r^{-d/2} \right)^{1/(2r)}\big) $ is bounded. Thus Assumption \eqref{eq:assumption_C_tilde} is satisfied.
\end{remark}

\begin{remark}
For the Euler-Maruyama scheme \eqref{eq:euler_scheme_def}, with a slight abuse of notation, $x$ is used both for the starting point and in the definition of the drift and diffusion coefficients. The transition density should be considered for constant drift and diffusion coefficients in this case. However the results remain valid as the Euler scheme is simply a Brownian process.
\end{remark}


\section{Proof of the Theorems}

\subsection{Recalls on density estimates for SDEs with bounded drift}
\label{sec:friedman}

We recall results on the bounds for the density of the solution of the SDE using the theory of partial differential equations.
Let us consider a generic SDE:
\begin{equation}
\label{eq:generic_SDE}
Z_0^x = x \in \mathbb{R}^d , \quad dZ_t^x = b_{_Z} (t, Z_t^x) dt + \sigma_{_Z} (t, Z_t^x) dW_t, \ t \in [0,T].
\end{equation}
Then under regularity assumptions on $b_Z$ and on $\sigma_Z$, the transition probability $p_{_Z}$ exists and is solution of the backward Kolmogorov PDE:
\begin{align}
& p_{_Z}(t,t,x,\cdot) = \delta_x, \ t \in [0,T], \nonumber \\
\label{eq:backward_kolmogorov}
& \partial_s p_{_Z}(s,t,x,y) = \langle b_{_Z}(s,x) , \nabla_x p_{_Z}(s,t,x,y) \rangle + \frac{1}{2} \text{Tr}\left(\sigma_{_Z}^\top(s,x) \nabla^2_x p_{_Z}(s,t,x,y) \sigma_{_Z}(s,x)\right), \ s<t \in [0,T].
\end{align}
Moreover, $p_{_Z}$ and its derivatives satisfy sub-gaussian Aronson's bounds:
\begin{theorem}[\cite{friedman}, Chapter 9, Theorem 7]
\label{thm:friedman}
Let $Z$ be the solution of \eqref{eq:generic_SDE} and let $T>0$. Assume $\textup{Lip}_t(b_{_Z})$ and $\textup{Lip}_t(\sigma_{_Z})$, that $b_{_Z}, \sigma_{_Z} \in \mathcal{C}^r_b$ and that $\sigma_{_Z}$ is elliptic. Then for every $m_0=0,1$ and for every $0 \le m_1 + m_2 \le r$, $\nabla^{m_0+m_1}_x \nabla^{m_2}_y p_{_Z}$ exists and
\begin{equation}
\forall s<t \in [0,T], \ \forall x,y \in \mathbb{R}^d, \ \| \nabla^{m_0+m_1}_x \nabla^{m_2}_y p_{_Z}(s,t,x,y) \| \le \frac{C}{(t-s)^{(d+m_0+m_1+m_2)/2}} e^{-c|y-x|^2/(t-s)},
\end{equation}
where the constants $C$ and $c$ only depend on the bounds on $b_{_Z}$ and $\sigma_{_Z}$ and on their derivatives and their Lipschitz constants, on the modulus of ellipticity of $\sigma_{_Z}$, on $d$ and on $T$.
\end{theorem}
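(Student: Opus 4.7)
The plan is to invoke the parametrix (Levi) method exactly as developed in Chapter 9 of Friedman's book, which applies because the backward Kolmogorov PDE \eqref{eq:backward_kolmogorov} is uniformly parabolic (by ellipticity of $\sigma_{_Z}$) with coefficients that are Lipschitz in $t$ and $\mathcal{C}^r_b$ in $x$. The starting point is to freeze the coefficients at a point $y_0$ and build the constant-coefficient fundamental solution
$$ \tilde{p}(s,t,x,y;y_0) = \frac{1}{(2\pi(t-s))^{d/2}\sqrt{\det a(t,y_0)}} \exp\left(-\frac{\langle a(t,y_0)^{-1}(y-x), y-x\rangle}{2(t-s)}\right) $$
(modulo the usual drift translation), where $a = \sigma_{_Z}\sigma_{_Z}^\top$. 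Its space derivatives can be read off the explicit Gaussian formula: each $\nabla_x$ or $\nabla_y$ brings a Hermite-polynomial prefactor that is absorbed into the Gaussian at the cost of a single factor $(t-s)^{-1/2}$, giving directly the target bound for $\tilde{p}$.

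Next I would set up Levi's representation
$$ p_{_Z}(s,t,x,y) = \tilde{p}(s,t,x,y;y) + \int_s^t \!\! \int_{\mathbb{R}^d} \tilde{p}(s,r,x,z;z) \, \Phi(r,t,z,y) \, dz \, dr, $$
where $\Phi$ is determined by a Volterra integral equation with kernel
$$ H(r,t,z,y) := (L^z - L^y) \tilde{p}(r,t,z,y;y). $$
Lipschitz continuity of $b_{_Z}$ and $\sigma_{_Z}$ in $x$ gains a factor $|z-y|$ in the difference of generators; combined with the Gaussian weight, this converts the apparent $(t-r)^{-(d+2)/2}$ singularity into an integrable $(t-r)^{-(d+1)/2}$ one. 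Iterating in a Neumann series for $\Phi$ yields convergence together with a Gaussian-type bound for $\Phi$ itself. Standard convolution estimates of the form
$$ \int_s^t \!\! \int_{\mathbb{R}^d} \frac{e^{-c|x-z|^2/(r-s)}}{(r-s)^{(d+\alpha)/2}} \cdot \frac{e^{-c|z-y|^2/(t-r)}}{(t-r)^{(d+\beta)/2}} \, dz \, dr \le \frac{C \, e^{-c'|y-x|^2/(t-s)}}{(t-s)^{(d+\alpha+\beta-2)/2}} $$
(valid while $\alpha+\beta<2$) then transfer the bound on $\tilde{p}$ to $p_{_Z}$ itself.

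To obtain the bounds on $\nabla^{m_0+m_1}_x \nabla^{m_2}_y p_{_Z}$, I would differentiate under the integral in Levi's representation. Derivatives falling on $\tilde{p}$ are controlled by the explicit Gaussian formula, and the case $m_0=1$ is deduced from the PDE \eqref{eq:backward_kolmogorov} itself, exchanging the single $\partial_s$ derivative for two $x$-derivatives modulated by the bounded coefficients $b_{_Z}, \sigma_{_Z}$. The main obstacle, and the reason the hypothesis $b_{_Z}, \sigma_{_Z}\in\mathcal{C}^r_b$ appears with $m_1+m_2\le r$, is to justify that differentiating the iterated Volterra integral preserves the Gaussian-type decay: each $\nabla_z$ falling on $H$ costs one unit of regularity on the coefficients, and one then uses the convolution estimate above to re-absorb the extra singularity $(t-r)^{-1/2}$ produced. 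Performing this induction on $m_1+m_2$ yields the stated bound with constants depending only on the Lipschitz constants, the $\mathcal{C}^r_b$-bounds on $b_{_Z},\sigma_{_Z}$, the ellipticity constant $\ubar\sigma_0$, the dimension $d$ and the horizon $T$, as claimed.
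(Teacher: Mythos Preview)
The paper does not supply its own proof of this statement: Theorem~\ref{thm:friedman} is quoted verbatim as a classical result from Friedman's monograph (Chapter~9, Theorem~7) and is used as background in Section~\ref{sec:friedman} without any argument. Your sketch of the parametrix (Levi) construction is precisely the method employed in that reference, so there is nothing to compare---your proposal is consistent with the cited source.

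One small inaccuracy worth flagging: in the statement, $m_0\in\{0,1\}$ is an extra \emph{spatial} derivative in $x$ (so that one may take up to $r+1$ derivatives in $x$ altogether), not a time derivative. Your explanation that ``the case $m_0=1$ is deduced from the PDE \eqref{eq:backward_kolmogorov} itself, exchanging the single $\partial_s$ derivative for two $x$-derivatives'' therefore does not address the right object. In Friedman's argument the additional $x$-derivative is obtained from the smoothing inherent in the parametrix convolution (the frozen Gaussian $\tilde p$ is $\mathcal{C}^\infty$, and the Volterra correction gains half an order at each iteration), not by trading against $\partial_s$. This is a minor point in what is otherwise a faithful outline of the classical proof.
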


Let us also recall the recent result from \cite{menozzi2021} giving Aronson's bounds of the partial derivatives with respect to $y$ in the case where $b_Z$ is unbounded. Considering \cite[Section 4]{menozzi2021} with \cite[(3.1)]{menozzi2021}, we have the following result.
\begin{theorem}
\label{thm:menozzi}
Let $Z$ be the solution of \eqref{eq:generic_SDE} and let $T>0$. Assume $\textup{Lip}_t(b_{_Z})$ and $\textup{Lip}_t(\sigma_{_Z})$, that $b_{_Z} \in \widetilde{\mathcal{C}}^r_b$, $\sigma_{_Z} \in \mathcal{C}^{r+1}_b$ and that $\sigma_{_Z}$ is elliptic. Then for every $0 \le m \le r$, $ \nabla^{m}_y p_{_Z}$ exists and
\begin{equation}
\forall s<t \in [0,T], \ \forall x,y \in \mathbb{R}^d, \ \| \nabla^{m}_y p_{_Z}(s,t,x,y) \| \le \frac{C}{(t-s)^{(d+m)/2}} e^{-c|y-x|^2/(t-s)},
\end{equation}
where the constants $C$ and $c$ only depend on the bounds on $b_{_Z}$ and $\sigma_{_Z}$ and on their derivatives and on their Lipschitz constants, on the modulus of ellipticity of $\sigma_{_Z}$, on $d$ and on $T$.
\end{theorem}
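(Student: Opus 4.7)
The plan is to deduce Theorem \ref{thm:menozzi} from the parametrix construction developed in \cite{menozzi2021}. Since $b_Z$ is only assumed to have bounded derivatives and not to be bounded itself, one cannot freeze the drift at a point as in the classical approach leading to Theorem \ref{thm:friedman}: the frozen drift would be of uncontrolled size and would appear in the constants. The remedy is to take as proxy a Gaussian process whose mean follows the deterministic flow generated by $b_Z$ and whose covariance is obtained by freezing $\sigma_Z$ along that flow.

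Concretely, for a fixed terminal couple $(t,y)$, I would introduce the backward characteristic $\theta_{s,t,y}$ solving $\partial_s \theta_{s,t,y}=-b_Z(s,\theta_{s,t,y})$ with $\theta_{t,t,y}=y$, and define a Gaussian proxy density $\widetilde{p}_Z(s,t,x,y)$ of mean $x+\theta_{s,t,y}-y$ and covariance $\int_s^t \sigma_Z\sigma_Z^\top(u,\theta_{u,t,y})\,du$. Two features are then crucial: (i) the Gaussian tail of $\widetilde{p}_Z$ is centred at $\theta_{s,t,y}$, and by Gr\"onwall applied to the flow one has $|\theta_{s,t,y}-y|\le C(t-s)(1+|y-x|)$, so that a Gaussian in $|y-\theta_{s,t,y}|/\sqrt{t-s}$ transfers, up to constants, into a Gaussian in $|y-x|/\sqrt{t-s}$; (ii) the difference between the infinitesimal generators of $Z$ and of the proxy no longer contains $b_Z$ itself, but only spatial increments of $b_Z$ and of $a_Z=\sigma_Z\sigma_Z^\top$, which are controlled under the assumptions $b_Z\in\widetilde{\mathcal{C}}^r_b$, $\sigma_Z\in\mathcal{C}^{r+1}_b$.

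The main step is the Duhamel expansion $p_Z=\sum_{k\ge 0}\widetilde{p}_Z\otimes H^{\otimes k}$, where $H$ is the space-time kernel built from the difference of generators and $\otimes$ denotes space-time convolution. Differentiating this series $m\le r$ times in $y$, each derivative hitting the leading proxy factor costs $(t-s)^{-1/2}$ while preserving a Gaussian in $|y-x|/\sqrt{t-s}$, which is precisely the rate claimed in the theorem. The regularity $\sigma_Z\in\mathcal{C}^{r+1}_b$, $b_Z\in\widetilde{\mathcal{C}}^r_b$ is what ensures that $H$ can be differentiated enough in $y$ for the iterated time singularities to remain in the Beta-integrable regime, so that the series sums into the bound $C(t-s)^{-(d+m)/2}\exp(-c|y-x|^2/(t-s))$.

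The main obstacle is to control $\nabla_y^m$ of the convolution iterates uniformly in $k$: naively placing all $m$ derivatives on the innermost factor produces a non-integrable singularity in time after iteration. The clean solution, carried out in Section 4 of \cite{menozzi2021}, is to redistribute the derivatives along the convolutions by integration by parts against the Gaussian proxy, which is exactly what the extra regularity $\sigma_Z\in\mathcal{C}^{r+1}_b$ buys, and to handle the nonlinear dependence of $\theta_{s,t,y}$ on $y$ via the Fa\`a di Bruno formula, keeping track of the Gaussian decay at each step. Given this machinery, the actual proof reduces to checking that the hypotheses of Theorem \ref{thm:menozzi} fit the framework of \cite[Section 4]{menozzi2021} and, in particular, match the estimate \cite[(3.1)]{menozzi2021}.
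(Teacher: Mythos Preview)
Your proposal is correct and aligns with the paper's treatment: the paper does not prove Theorem~\ref{thm:menozzi} at all but simply records it as a citation of \cite[Section~4]{menozzi2021} together with \cite[(3.1)]{menozzi2021}. Your sketch of the flow-based parametrix (proxy centred at the backward characteristic $\theta_{s,t,y}$, Duhamel expansion, redistribution of $\nabla_y^m$ across the iterates) is precisely the content of that reference, so you are in fact supplying more detail than the paper itself does.
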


\subsection{Preliminary results}

In order to apply the bounds on the densities from Theorem \ref{thm:friedman} to Theorem \ref{thm:dtv_bound_L1_density}, we first "cut" the drifts $b_1$ and $b_2$ on a compact set. That is, we instead consider the processes $\widetilde{X}$ and $\widetilde{Y}$ defined by
\begin{align}
\label{eq:def_Y:1}
& d\widetilde{X}^x_t = \tilde{b}^x_1(t, \widetilde{X}^x_t) dt + \sigma_1(t, \widetilde{X}_t^x) dW_t , \ t \in [0,T],\\
\label{eq:def_Y:2}
& d\widetilde{Y}^x_t = \tilde{b}^x_2(t, \widetilde{Y}^x_t) dt + \sigma_2(t, \widetilde{Y}_t^x) dW_t , \ t \in [0,T],
\end{align}
where $\tilde{b}_i$, $i=1,2$ is defined as follows. We choose $R > 0$ and we consider a $\mathcal{C}^\infty$ decreasing function $\psi : \mathbb{R}^+ \to \mathbb{R}^+$ such that $\psi = 1$ on $[0,R^2]$ and $\psi = 0$ on $[(R+1)^2,\infty)$ and we define $\tilde{b}^x_i(t, y) := b_i(t,y)\psi(|y-x|^2)$, so that $\tilde{b}_i^x$ is bounded:
\begin{equation}
\label{eq:b_tilde_bounded}
\textstyle \forall y \in \mathbb{R}^d, \ \forall t \in[0,T], \ |\tilde{b}_i^x(t,y)| \le \sup_{z \in \textbf{B}(x,R+1)} |b_i(t,z)| \le C(1+|x|) ,
\end{equation}
because $b_i$ is Lipschitz continuous.

\begin{lemma}
\label{lemma:dTV_X_Y}
Assume $\textup{Lip}_t(b_1)$, $\textup{Lip}_t(\sigma_1)$, $b_1 \in \widetilde{\mathcal{C}}^1_b$, $\sigma_1 \in \mathcal{C}^1_b$. Then for every $x \in \mathbb{R}^d$ and $t \in [0,T]$,
\begin{equation}
\dtv(X^x_t, \widetilde{X}^x_t) \le C(1+|b_1(0,x)|^2)t.
\end{equation}
\end{lemma}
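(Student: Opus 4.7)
The plan is to use a pathwise coupling combined with a stopping-time argument, exploiting the fact that $b_1$ and $\tilde{b}^x_1$ coincide on $\textbf{B}(x,R)$ by construction of $\psi$.

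First, I would realize $X^x$ and $\widetilde{X}^x$ on the same probability space, driven by the same Brownian motion $W$. Under the hypotheses (Lipschitz continuity in $t$ and $x$ of both coefficients), strong pathwise uniqueness holds for both SDEs. Introduce the stopping time $\tau_R := \inf\{s\ge 0 : |X^x_s - x| > R\}$. Since $\psi \equiv 1$ on $[0,R^2]$, the drifts $b_1$ and $\tilde{b}^x_1$ agree on $\textbf{B}(x,R)$, so pathwise uniqueness forces $X^x_s = \widetilde{X}^x_s$ for every $s \le \tau_R$. With the convention $\dtv(\mu,\nu) = 2\sup_A|\mu(A)-\nu(A)|$, the standard coupling inequality then gives
$$ \dtv(X^x_t,\widetilde{X}^x_t) \le 2\mathbb{P}\bigl(X^x_t \ne \widetilde{X}^x_t\bigr) \le 2\mathbb{P}(\tau_R \le t) .$$

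Next, I would estimate $\mathbb{P}(\tau_R \le t)$ via Markov's inequality applied to the stopped process. By path continuity, $\{\tau_R \le t\} \subseteq \{\sup_{s \le t}|X^x_{s\wedge\tau_R} - x| \ge R\}$, so it suffices to control $\mathbb{E}\sup_{s\le t}|X^x_{s\wedge\tau_R} - x|^2$. Writing out the SDE and combining Minkowski's inequality with Doob's $L^2$-inequality,
$$ \mathbb{E}\sup_{s\le t}|X^x_{s\wedge\tau_R}-x|^2 \le 2t\,\mathbb{E}\!\int_0^{t\wedge\tau_R}\!|b_1(r,X^x_r)|^2\,dr + 8\,\mathbb{E}\!\int_0^{t\wedge\tau_R}\!\|\sigma_1(r,X^x_r)\|^2\,dr .$$
The key observation is that for $r \le \tau_R$ one has $|X^x_r - x| \le R$, so $\textup{Lip}_t(b_1)$ combined with $b_1 \in \widetilde{\mathcal{C}}^1_b$ (Lipschitz in $x$) yields
$$ |b_1(r,X^x_r)| \le |b_1(0,x)| + C(r + |X^x_r - x|) \le |b_1(0,x)| + C(T+R), $$
whence $|b_1(r,X^x_r)|^2 \le C(1+|b_1(0,x)|^2)$. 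Using also $\|\sigma_1\|_\infty < \infty$, one obtains
$$ \mathbb{E}\sup_{s\le t}|X^x_{s\wedge\tau_R} - x|^2 \le Ct^2(1+|b_1(0,x)|^2) + Ct \le C(1+|b_1(0,x)|^2)\,t $$
for $t \le T$, and Markov's inequality then produces $\mathbb{P}(\tau_R \le t) \le CR^{-2}(1+|b_1(0,x)|^2)t$. Since $R$ is the fixed constant used to define $\psi$, it is absorbed into $C$ and the lemma follows.

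The only delicate point is to express the drift bound in terms of $|b_1(0,x)|$ rather than $|x|$: this is what yields the claimed dependence on $x$ through $b_1(0,x)$ only, and relies crucially on the Lipschitz continuity of $b_1$ in $(t,x)$ to compare $b_1(r,y)$ with $b_1(0,x)$ on $[0,T]\times\textbf{B}(x,R)$. Everything else reduces to a routine application of Doob's and Markov's inequalities.
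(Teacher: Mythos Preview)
Your proof is correct and follows essentially the same route as the paper: couple $X^x$ and $\widetilde X^x$ via the same Brownian motion, observe they coincide until the first exit from $\textbf{B}(x,R)$, and bound the exit probability by Markov's inequality combined with Doob's $L^2$-inequality. The only (minor) difference is that you work with the \emph{stopped} process $X^x_{\cdot\wedge\tau_R}$, which keeps the drift bounded by $|b_1(0,x)|+C(T+R)$ and lets you conclude directly, whereas the paper bounds the unstopped supremum $\mathbb{E}\sup_{s\le t}|X^x_s-x|^2$ and therefore needs a Gronwall step to close the estimate; your variant is slightly cleaner but the two arguments are otherwise identical.
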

\begin{proof}
Let $f : \mathbb{R}^d \to \mathbb{R}$ be measurable and bounded. We remark that on the event $\lbrace \sup_{s \in [0,t]} |X_s^x - x|^2 \le R^2 \rbrace$, we have $\widetilde{X}^x_t = X^x_t$, so that
$$ |\mathbb{E}f(\widetilde{X}^x_t) - \mathbb{E}f(X^x_t)| \le 2\|f\|_\infty \mathbb{P}\left(\sup_{s \in [0,t]} |X_s^x - x|^2 > R^2 \right) .$$
But using the inequality $|u+v|^2 \le 2|u|^2 + 2|v|^2$ we have
\begin{align*}
|X_t^x {-} x|^2 & \le 2 \left|\int_0^t b_1(s,X_s^x)ds \right|^2 {+} 2\left|\int_0^t \sigma_1(s,X_s^x)dW_s \right|^2 \le 2t \int_0^t |b_1(s,X_s^x)|^2 ds {+} 2\left|\int_0^t \sigma_1(s,X_s^x)dW_s \right|^2 \\
& \le 4t[b_1]_{\text{Lip}}^2 \left( \int_0^t |X_s^x-x|^2ds + \frac{1}{3}t^3 \right) + 4t^2|b_1(0,x)|^2 + 2\left|\int_0^t \sigma_1(s,X_s^x)dW_s \right|^2
\end{align*}
so that
\begin{align*}
\mathbb{E} \sup_{s \in [0,t]} |X_s^x-x|^2 & \le 4t\lip{b_1}^2 \int_0^t \left(\mathbb{E} \sup_{u \in [0,s]} |X_u^x-x|^2 \right) ds + \frac{4}{3}\lip{b_1}^2 t^4 + 4t^2|b_1(0,x)|^2 \\
& \quad + 2 \mathbb{E} \sup_{s \in [0,t]} \left|\int_0^s \sigma_1(u,X_u^x)dW_u \right|^2.
\end{align*}
Moreover using Doob's martingale inequality we have
\begin{align*}
\mathbb{E} \sup_{s \in [0,t]} \left|\int_0^s \sigma_1(u,X_u^x)dW_u \right|^2 \le 4 \mathbb{E}\left|\int_0^t \sigma_1(u,X^x_u) dW_u \right|^2 = 4 \mathbb{E} \int_0^t \sigma_1^2(u,X_u^x)du \le 4 \|\sigma_1\|_\infty^2 t.
\end{align*}
Then we define the non-decreasing deterministic process $ S_t := \mathbb{E} \sup_{s \in [0,t]} |X_s^x - x|^2 $
and we get the differential inequality (using $t \le T$)
$$ S_t \le 4t\left(T|b_1(0,x)|^2 + \frac{1}{3}\lip{b_1}^2 T^3 + 2\|\sigma_1\|_\infty^2 \right) + 4t[b_1]^2_{\text{Lip}} \int_0^t S_s ds ,$$
so the Gronwall lemma yields
$$ S_t \le 4t \left( T|b_1(0,x)|^2 + \frac{1}{3}\lip{b_1}^2 T^3 + 2\|\sigma_1\|_\infty^2 \right) e^{2t^2 \lip{b_1}^2} \le C(1+|b_1(0,x)|^2)t. $$
Using Markov's inequality, we have then
$$|\mathbb{E}f(\widetilde{X}^x_t) - \mathbb{E}f(X^x_t)| \le 2\|f\|_\infty \mathbb{P}\left(\sup_{s \in [0,t]} |X_s^x - x|^2 > R^2 \right) \le 2\|f\|_\infty \frac{C(1+|b_1(0,x)|^2)t}{R^2} .$$
\end{proof}

We can now apply Theorem \ref{thm:friedman} to $\widetilde{X}$ and to $\widetilde{Y}$ however the constants arising depend on the bound on $\|\tilde{b}_i^x\|_\infty$ and thus on $x$. In order to deal with the dependency in $\|\tilde{b}_i^x\|_\infty$, we apply the Girsanov formula and reduce to the null drift case.
\begin{proposition}
\label{prop:qian}
Let $Z^x$ be the solution of
\begin{equation}
\label{eq:def_Z}
Z_0^x = x, \quad dZ^x_t = \sigma_1(t,Z^x_t) dW_t, \ t \in [0,T] .
\end{equation}
Assume $\textup{Lip}_t(b_1)$, $\textup{Lip}_t(\sigma_1)$, $b_1 \in \widetilde{C}^1_b$, $\sigma_1 \in \mathcal{C}^1_b$ and $\sigma_1$ is elliptic. Then we have for every $t \in [0,T]$, $x,y \in \mathbb{R}^d$,
\begin{equation}
\label{eq:girsanov_p}
p_{_{\widetilde{X}}}(0,t,x,y) = p_{_Z}(0,t,x,y) + \int_0^t \mathbb{E}\left[U_s^x \langle \tilde{b}_1^x(s,Z_s^x), \nabla_x p_{_Z}(s,t,Z_s^x,y) \rangle \right] ds,
\end{equation}
where $\widetilde{X}$ is defined in \eqref{eq:def_Y:1} and $U^x$ is defined as
\begin{align}
& U_s^x = \exp\left( \int_0^s \langle g(u,Z_u^x) \tilde{b}_1^x(u,Z_u^x), dZ_u^x \rangle - \frac{1}{2} \int_0^s \langle g(u,Z_u^x)\tilde{b}_1^x(u,Z_u^x), \tilde{b}_1^x(u,Z_u^x) \rangle du \right), \\
& g = (\sigma_1 \sigma_1^\top)^{-1}.
\end{align}
\end{proposition}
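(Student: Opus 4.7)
The plan is to combine a Girsanov change of measure (which produces the factor $U_s^x$) with an Itô formula applied to the process $s \mapsto U_s^x\, p_{_Z}(s, t, Z_s^x, y)$, and to pass to the limit $s \to t^-$ to recover $p_{_{\widetilde X}}(0,t,x,y)$ via the terminal condition $p_{_Z}(t,t,\cdot,y) = \delta_y$.

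First, since $\tilde b_1^x$ is bounded by \eqref{eq:b_tilde_bounded} and $g = (\sigma_1 \sigma_1^\top)^{-1}$ is bounded thanks to uniform ellipticity of $\sigma_1$, Novikov's criterion shows that $(U_s^x)_{s\in[0,t]}$ is a true $\mathbb{P}$-martingale. Under $d\mathbb{Q}_t/d\mathbb{P} = U_t^x$, the process $Z^x$ satisfies $dZ_s^x = \tilde b_1^x(s,Z_s^x)\,ds + \sigma_1(s,Z_s^x)\,d\widetilde W_s$ with $\widetilde W$ a $\mathbb{Q}_t$-Brownian motion, hence has the same law as $\widetilde X^x$ under $\mathbb{P}$. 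In particular $\mathbb{E}[U_{s_0}^x f(Z_{s_0}^x)] = \mathbb{E}[f(\widetilde X_{s_0}^x)]$ for every bounded measurable $f$ and every $s_0 \le t$.

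Next, fix $(t, y)$ and set $\phi(s, z) := p_{_Z}(s, t, z, y)$ for $s \in [0, t)$. Since $Z$ is driftless, the backward Kolmogorov equation \eqref{eq:backward_kolmogorov} reduces to $\partial_s \phi + \tfrac12 \tr(\sigma_1^\top \nabla_z^2 \phi\, \sigma_1) = 0$. Applying Itô's formula, the bounded-variation part of $d\phi(s, Z_s^x)$ cancels and $d\phi(s, Z_s^x) = \nabla_z \phi \cdot \sigma_1(s, Z_s^x)\,dW_s$. Together with $dU_s^x = U_s^x \langle g\tilde b_1^x, \sigma_1\,dW_s\rangle$ and the identity $g \sigma_1 \sigma_1^\top = I_d$, the cross-bracket works out to $d\langle U^x, \phi(\cdot, Z_\cdot^x)\rangle_s = U_s^x \langle \tilde b_1^x(s, Z_s^x), \nabla_z \phi(s, Z_s^x)\rangle\,ds$, so that Itô's product rule gives
\begin{equation*}
d\bigl(U_s^x \phi(s, Z_s^x)\bigr) = U_s^x \bigl\langle \tilde b_1^x(s, Z_s^x), \nabla_x p_{_Z}(s, t, Z_s^x, y)\bigr\rangle\,ds + dM_s,
\end{equation*}
with $M$ a local martingale. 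For any $s_0 < t$, the bounds of Theorem \ref{thm:friedman} on $\nabla_x p_{_Z}$ together with uniform $L^p$ estimates on $U^x$ (which hold since $\tilde b_1^x$ is bounded) make $M$ a true martingale on $[0, s_0]$; taking expectation yields
\begin{equation*}
\mathbb{E}\bigl[U_{s_0}^x\, p_{_Z}(s_0, t, Z_{s_0}^x, y)\bigr] - p_{_Z}(0, t, x, y) = \int_0^{s_0} \mathbb{E}\bigl[U_s^x \langle \tilde b_1^x(s, Z_s^x), \nabla_x p_{_Z}(s, t, Z_s^x, y)\rangle\bigr]\,ds.
\end{equation*}

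Finally, let $s_0 \to t^-$. By the Girsanov identification, the left-hand side equals $\int p_{_{\widetilde X}}(0, s_0, x, z)\, p_{_Z}(s_0, t, z, y)\, dz$, which converges to $p_{_{\widetilde X}}(0, t, x, y)$ since $p_{_Z}(s_0, t, \cdot, y)$ is an approximation of identity at $y$ of width $\sqrt{t-s_0}$ while $p_{_{\widetilde X}}(0, s_0, x, \cdot)$ is continuous in $s_0$ up to $t$. The main obstacle lies on the right-hand side: the gradient $\nabla_x p_{_Z}(s,t,\cdot,y)$ has a singularity of order $(t-s)^{-(d+1)/2}$ as $s \to t$, which is not integrable on its own. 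The argument relies on the Gaussian tail $e^{-c|y - Z_s^x|^2/(t-s)}$ combined with the Aronson bound on the law of $Z_s^x$ (centered at $x$, of width $\sqrt s$) from Theorem \ref{thm:friedman}: the standard parametrix-type Gaussian convolution of these two factors produces, after taking expectation, an integrand bounded by $C\, t^{-d/2} (t-s)^{-1/2}$, which is integrable on $[0, t]$ and yields \eqref{eq:girsanov_p} by dominated convergence.
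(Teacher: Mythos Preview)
Your argument is correct and is essentially a self-contained derivation of the representation formula that the paper obtains by direct citation: the paper's proof consists only in noting that $p_{_{\widetilde X}}$, $p_{_Z}$ and $\nabla_x p_{_Z}$ exist by Theorem~\ref{thm:friedman}, and then invoking \cite[Theorem~2.4 and Remark~2.5]{qian2003} (extended to the non-homogeneous setting). What you have written is, in outline, precisely the proof of that cited theorem: Girsanov identifies the law of $\widetilde X^x$ with that of $Z^x$ under the measure with density $U^x$, It\^o's product rule applied to $s\mapsto U_s^x\,p_{_Z}(s,t,Z_s^x,y)$ together with the backward equation for $p_{_Z}$ produces the drift term $U_s^x\langle \tilde b_1^x,\nabla_x p_{_Z}\rangle$, and one lets $s_0\to t^-$.

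Two small points worth tightening. First, in the final dominated-convergence step you bound $\mathbb{E}\bigl[|\nabla_x p_{_Z}(s,t,Z_s^x,y)|\bigr]$ by a Gaussian convolution giving $C\,t^{-d/2}(t-s)^{-1/2}$, but the actual integrand carries the factor $U_s^x$; to separate it one needs H\"older with $\mathbb{E}|U_s^x|^p$ and $\mathbb{E}|\nabla_x p_{_Z}|^q$, and the latter has a worse singularity $(t-s)^{-(1+(1-q^{-1})d)/2}$, which forces $q<d/(d-1)$ for integrability. This is exactly the computation carried out in the lemma immediately following the proposition in the paper, so the gap is easily closed. Second, the convergence of $\int p_{_{\widetilde X}}(0,s_0,x,z)\,p_{_Z}(s_0,t,z,y)\,dz$ to $p_{_{\widetilde X}}(0,t,x,y)$ uses joint continuity of $p_{_{\widetilde X}}$, which is again guaranteed by Theorem~\ref{thm:friedman}. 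With these points made explicit, your proof is complete and has the advantage of being self-contained, whereas the paper's proof trades this for brevity by citing \cite{qian2003}.
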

\begin{proof}
First, note that since $\sigma_1$ is elliptic and since $\tilde{b}_1^x, \sigma_1 \in \mathcal{C}^1_b$, then $p_{_{\widetilde{X}}}$ and $p_{_Z}$ exist as well as $\nabla_x p_{_Z}$ (Theorem \ref{thm:friedman}).
We then use \cite[Theorem 2.4]{qian2003} extended to non-homogeneous diffusion processes. Following \cite[Remark 2.5]{qian2003}, since $\sigma_1$ is elliptic and bounded, the assumptions of \cite[Theorem 2.4]{qian2003} hold.
\end{proof}

We also have the following bounds on the process $U^x$.
\begin{lemma}
With the same assumptions as in Proposition \ref{prop:qian}, for every $p \ge 2$, $x \in \mathbb{R}^d$ and $t \in [0,T]$ we have
\begin{equation}
\mathbb{E} \left[ \sup_{s \in [0,t]} |U_s^x|^p \right] \le e^{Cp^2\|\tilde{b}_1^x\|_\infty^2 t} .
\end{equation}
\end{lemma}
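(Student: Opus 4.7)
The plan is to recognise that $U^x$ is a Doléans-Dade exponential and then combine the boundedness of its integrand (coming from the ellipticity of $\sigma_1$ plus the boundedness of $\tilde{b}_1^x$) with Doob's maximal inequality.

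First I would rewrite $U^x$ in the standard stochastic exponential form. Substituting $dZ_u^x = \sigma_1(u,Z_u^x)dW_u$ into the definition of $U^x$ and using the identity $\sigma_1^\top g \sigma_1 = I_d$ (which holds since $g=(\sigma_1\sigma_1^\top)^{-1}$ and $\sigma_1$ is invertible by ellipticity), one finds that, with $h(u,y) := \sigma_1^\top(u,y)g(u,y)\tilde{b}_1^x(u,y)$,
\begin{equation*}
U_s^x = \mathcal{E}\Bigl(\int_0^{\cdot} \langle h(u,Z_u^x), dW_u\rangle\Bigr)_s = \exp\Bigl(\int_0^s \langle h, dW_u\rangle - \tfrac{1}{2}\int_0^s |h|^2\,du\Bigr),
\end{equation*}
since $\langle g\tilde{b}_1^x,\tilde{b}_1^x\rangle = \tilde{b}_1^{x\,\top} g\sigma_1\sigma_1^\top g\tilde{b}_1^x = |h|^2$. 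Moreover, ellipticity gives $\|g\|_\infty \le \ubar{\sigma}_0^{-2}$, so the integrand is uniformly bounded: $|h(u,Z_u^x)|^2 \le \ubar{\sigma}_0^{-2}\|\tilde{b}_1^x\|_\infty^2$.

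Next I would write, for any $p\ge 2$,
\begin{equation*}
(U_s^x)^p = \mathcal{E}\Bigl(p\int_0^{\cdot}\langle h, dW_u\rangle\Bigr)_s \cdot \exp\Bigl(\tfrac{p(p-1)}{2}\int_0^s |h(u,Z_u^x)|^2\,du\Bigr).
\end{equation*}
The exponential factor is increasing in $s$ and, by the $L^\infty$ bound on $|h|^2$, is dominated on $[0,t]$ by $\exp\!\bigl(\tfrac{p(p-1)}{2}\ubar{\sigma}_0^{-2}\|\tilde{b}_1^x\|_\infty^2 t\bigr)\le \exp(Cp^2\|\tilde{b}_1^x\|_\infty^2 t)$. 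Since $ph$ is bounded, Novikov's criterion is trivially satisfied for $p\int_0^\cdot \langle h, dW_u\rangle$, so $\mathcal{E}(p\int \langle h,dW\rangle)$ is a genuine positive martingale with expectation $1$. In particular, the exponential identity yields the pointwise bound $\mathbb{E}[(U_t^x)^p]\le \exp(Cp^2\|\tilde{b}_1^x\|_\infty^2 t)$.

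To pass from the pointwise bound to the supremum, I would apply Doob's $L^p$ maximal inequality to the positive martingale $U^x$: for $p\ge 2$,
\begin{equation*}
\mathbb{E}\Bigl[\sup_{s\in [0,t]} (U_s^x)^p\Bigr] \le \Bigl(\frac{p}{p-1}\Bigr)^p \mathbb{E}[(U_t^x)^p] \le 4\exp(Cp^2\|\tilde{b}_1^x\|_\infty^2 t),
\end{equation*}
since $(p/(p-1))^p\le 4$ for $p\ge 2$. Absorbing the universal constant $4$ into the constant $C$ (which is the convention adopted in the paper) yields the claimed bound.

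No step looks like a genuine obstacle; the main subtlety is keeping the bookkeeping between the stochastic exponential with parameter $p$ and the drift correction clean, together with checking that Novikov applies both for the original and the rescaled martingales. The ellipticity hypothesis on $\sigma_1$ is what makes the whole argument quantitative, since without it the integrand $h$ could fail to be bounded.
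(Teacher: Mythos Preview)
Your proof is correct. Both you and the paper exploit that $U^x$ is a stochastic exponential with bounded integrand and finish with Doob's maximal inequality, but the route to the moment bound differs. The paper observes that $(U^x)^q$ itself is a martingale, applies Doob's $L^2$ inequality together with the It\^o isometry to obtain a differential inequality for $s\mapsto \mathbb{E}[\sup_{u\le s}|U_u^x|^{2q}]$, and then closes with Gronwall. You instead use the algebraic identity $(U^x)^p=\mathcal{E}(pM)\exp\bigl(\tfrac{p(p-1)}{2}\langle M\rangle\bigr)$, bound the quadratic-variation factor deterministically via ellipticity, and invoke Novikov so that $\mathcal{E}(pM)$ has unit expectation; this yields the pointwise $p$th-moment bound directly, after which Doob's $L^p$ inequality on the martingale $U^x$ gives the supremum. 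Your argument is a bit more explicit about constants and sidesteps the Gronwall step; the paper's is shorter to state once one accepts the differential inequality. Both share the same harmless sloppiness of absorbing the Doob constant into $C$ inside the exponential.
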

\begin{proof}
We recall that for every $q \ge 1$, the process $(U^x)^q$ is a martingale with
$$ d(U_s^x)^q = q(U_s^x)^q \big\langle g(s,Z_s^x) \tilde{b}_1^x(s,Z_s^x), \sigma_1(s,Z_s^x) dW_s \big\rangle .$$
Thus, Doob's martingale inequality yields
$$ \mathbb{E} \left[ \sup_{s \in [0,t]} |U_s^x|^{2q} \right] \le Cq^2 \|\tilde{b}_1^x\|_\infty^2 \mathbb{E} \int_0^t |U_s^x|^{2q} ds \le Cq^2 \|\tilde{b}_1^x\|_\infty^2 \int_0^t \mathbb{E} \left[ \sup_{u \in [0,s]} |U_s^x|^{2q} \right] ds. $$
So with $U^x_0=1$ we obtain
$$ \mathbb{E} \left[ \sup_{s \in [0,t]} |U_s^x|^{2q} \right] \le e^{Cq^2\|\tilde{b}_1^x\|_\infty^2 t} .$$
\end{proof}


\begin{lemma}
With the same assumptions as in Proposition \ref{prop:qian}, we have for every $x,y \in \mathbb{R}^d$ and $t \in [0,T]$,
\begin{equation}
\label{eq:girsanov_b}
\left| \int_0^t \mathbb{E}\left[U_s^x \langle \tilde{b}_1^x (s,Z_s^x), \nabla_x p_{_Z}(s,t,Z_s^x,y) \rangle \right] ds \right| \le C e^{C\|\tilde{b}_1^x\|_\infty^2} \frac{e^{-c|y-x|^2/t}}{t^{(d-1)/2}}.
\end{equation}
\end{lemma}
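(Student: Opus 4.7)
My plan is to bound the expectation inside the integral by a Hölder inequality that splits $U_s^x$ from $\nabla_x p_{_Z}$, using the fact that the driftless process $Z$ has density satisfying a clean Aronson bound (Theorem \ref{thm:friedman}) with constants independent of $\tilde{b}_1^x$. Since $\tilde{b}_1^x$ is bounded, first pull it out:
\begin{equation*}
\left| \int_0^t \mathbb{E}\left[U_s^x \langle \tilde{b}_1^x(s,Z_s^x), \nabla_x p_{_Z}(s,t,Z_s^x,y) \rangle \right] ds \right| \le \|\tilde{b}_1^x\|_\infty \int_0^t \mathbb{E}\left[U_s^x \| \nabla_x p_{_Z}(s,t,Z_s^x,y)\|\right] ds.
\end{equation*}
I would then choose conjugate exponents $p, q \in (1,\infty)$ with $p > d$ (so that $q < d/(d-1)$, with the convention $d/0 = \infty$ for $d=1$), for instance $p = d+1$, $q = (d+1)/d$, and apply Hölder.

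For the $U_s^x$ factor, the previous lemma gives $\mathbb{E}[(U_s^x)^p]^{1/p} \le e^{Cp\|\tilde{b}_1^x\|_\infty^2 T}$, which is of the form $e^{C \|\tilde{b}_1^x\|_\infty^2}$ with $C$ depending only on $d$ and $T$. For the $\|\nabla_x p_{_Z}\|$ factor I would write the expectation as an integral against $p_{_Z}(0,s,x,z)$, then plug in the Friedman bounds
\begin{equation*}
p_{_Z}(0,s,x,z) \le \frac{C}{s^{d/2}} e^{-c|z-x|^2/s}, \qquad \|\nabla_x p_{_Z}(s,t,z,y)\|^q \le \frac{C^q}{(t-s)^{q(d+1)/2}} e^{-cq|y-z|^2/(t-s)},
\end{equation*}
and use the Gaussian convolution identity
\begin{equation*}
\int_{\mathbb{R}^d} e^{-c|z-x|^2/s - cq|y-z|^2/(t-s)} dz = \left(\frac{\pi s(t-s)}{c((t-s)+qs)}\right)^{d/2} \exp\!\left(-\frac{cq|y-x|^2}{(t-s)+qs}\right).
\end{equation*}
Bounding $(t-s)+qs \in [t, qt]$ for $s \in [0,t]$ yields, after taking the $1/q$-th power,
\begin{equation*}
\mathbb{E}\big[\|\nabla_x p_{_Z}(s,t,Z_s^x,y)\|^q\big]^{1/q} \le \frac{C}{(t-s)^{(1+d/p)/2}\, t^{d/(2q)}} \, e^{-c|y-x|^2/t}
\end{equation*}
for some $c>0$ depending on $d$.

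Since $p > d$ implies $(1 + d/p)/2 < 1$, the singularity in $s$ is integrable, and
\begin{equation*}
\int_0^t \frac{ds}{(t-s)^{(1+d/p)/2}} = C\, t^{(1 - d/p)/2}.
\end{equation*}
The total power of $t$ is $(1 - d/p)/2 - d/(2q) = \tfrac{1}{2} - \tfrac{d}{2}(\tfrac{1}{p} + \tfrac{1}{q}) = -(d-1)/2$, matching the claimed rate. Putting everything together,
\begin{equation*}
\|\tilde{b}_1^x\|_\infty \int_0^t \mathbb{E}\left[U_s^x \|\nabla_x p_{_Z}\|\right] ds \le C\,\|\tilde{b}_1^x\|_\infty\, e^{C\|\tilde{b}_1^x\|_\infty^2}\, \frac{e^{-c|y-x|^2/t}}{t^{(d-1)/2}},
\end{equation*}
and the polynomial prefactor $\|\tilde{b}_1^x\|_\infty$ is absorbed into $e^{C'\|\tilde{b}_1^x\|_\infty^2}$.

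The main obstacle here is that the naive Cauchy--Schwarz choice $p=q=2$ produces the singularity $(t-s)^{-(d+2)/4}$, which is non-integrable in $s$ as soon as $d \ge 2$. The point is that $U^x$ has Gaussian moments of every order (with cost only $e^{Cp\|\tilde{b}_1^x\|_\infty^2 T}$, which is tolerable for any fixed $p$), so we can afford to take $p$ large and $q$ close to $1$, which softens the boundary singularity at $s=t$ enough to integrate. A secondary technicality is keeping the off-diagonal exponential factor uniform in $s$: one must verify $(t-s) + qs \ge t$ to bound the convolved Gaussian by $e^{-c|y-x|^2/t}$, which is immediate for $q \ge 1$, and $(t-s) + qs \le qt$ to ensure the spatial exponent is strictly positive.
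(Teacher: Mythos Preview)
Your proof is correct and follows essentially the same route as the paper: Hölder with $q<d/(d-1)$ (equivalently $p>d$), the moment bound on $U^x$ from the previous lemma, Aronson's estimates for $p_{_Z}$ and $\nabla_x p_{_Z}$, and a Gaussian convolution to recover the off-diagonal factor $e^{-c|y-x|^2/t}$. The only cosmetic difference is that the paper invokes Lemma~\ref{lemma:friedman_Lemma_7} for the convolution step where you compute it explicitly, and you make the obstruction to $p=q=2$ explicit; the exponent bookkeeping and the final $t^{-(d-1)/2}$ rate are identical.
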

\begin{proof}
We use Theorem \ref{thm:friedman} on the process $Z^x$, which yields bounds with constants depending on $\sigma_1$ but not on $\tilde{b}_1^x$. We obtain for every $q \ge 1$ and for every $s \in [0,t]$:
\begin{align*}
\mathbb{E}\left| \nabla_x p_{_Z}(s,t,Z_s^x,y) \right|^q & = \int_{\mathbb{R}^d} | \nabla_x p_{_Z}(s,t,\xi,y)|^q p_{_Z}(0,s,x,\xi) d\xi \\
& \le \frac{C_q}{(t-s)^{(q+(q-1)d)/2}} \int_{\mathbb{R}^d} \frac{1}{(s(t-s))^{d/2}} \exp\left(-c_q \left(\frac{|y-\xi|^2}{t-s} + \frac{|\xi-x|^2}{s} \right)\right) d\xi \\
& \le \frac{C_q}{t^{d/2}} \frac{1}{(t-s)^{(q+(q-1)d)/2}} e^{-c_q|y-x|^2/t},
\end{align*}
where we used Lemma \ref{lemma:friedman_Lemma_7} in the appendix.
Then for $p^{-1} + q^{-1} = 1$ and $p\ge 2$, using the Hölder inequality we have
\begin{align*}
& \left|\int_0^t \mathbb{E}\left[U_s^x \langle \tilde{b}_1^x(s, Z_s^x), \nabla_x p_{_Z}(s,t,Z_s^x,y) \rangle \right] ds \right| \\
& \quad \quad \quad \quad \quad \quad \quad \le \|\tilde{b}_1^x\|_\infty \Big(\sup_{s \in [0,t]} \mathbb{E}|U_s^x|^p \Big)^{1/p} \int_0^t \left(\mathbb{E}\left| \nabla_x p_{_Z}(s,t,Z_s^x,y) \right|^q \right)^{1/q} ds \\
& \quad \quad \quad \quad \quad \quad \quad \le \|\tilde{b}_1^x\|_\infty e^{Cp \|\tilde{b}_1^x\|_\infty^2 t} \frac{C_q e^{-c_q|y-x|^2/t}}{t^{d/(2q)}} \int_0^t \frac{ds}{(t-s)^{(1+(1-q^{-1})d)/2}} .
\end{align*}
The integral in $ds$ converges under the condition $q < d/(d-1)$ if $d>1$, and for any value of $q>1$ if $d=1$. Then performing the change of variable $s=tu$ we obtain
\begin{align*}
\left| \int_0^t \mathbb{E}\left[U_s^x \langle \tilde{b}_1^x(s, Z_s^x), \nabla_x p_{_Z}(s,t,Z_s^x,y) \rangle \right] ds \right| \le \|\tilde{b}_1^x\|_\infty e^{Cp \|\tilde{b}_1^x\|_\infty^2 T} \frac{C_q e^{-c_q|y-x|^2/t}}{t^{(d-1)/2}} \le C e^{C\|\tilde{b}_1^x\|_\infty^2} \frac{e^{-c|y-x|^2/t}}{t^{(d-1)/2}}.
\end{align*}

\end{proof}

\subsection{Proof of Theorem \ref{thm:main:1}}
\label{sec:thm1_proof}

\begin{lemma}
\label{lemma:cut_drift}
We have for every $x \in \mathbb{R}^d$ and $t \in [0,T]$:
\begin{align*}
\dtv(X^x_t, Y^x_t) \le \dtv(Z^x_t, V^x_t) + Ce^{C|x|^2}t^{1/2},
\end{align*}
where $dZ^x_t = \sigma_1(t,Z^x_t)dW_t$ and $dV^x_t = \sigma_2(t,V^x_t)dW_t$.
\end{lemma}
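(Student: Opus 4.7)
The plan is to combine the two preceding preparatory results: Lemma \ref{lemma:dTV_X_Y} removes the effect of localizing the drifts (replacing $X,Y$ by the cut processes $\widetilde{X},\widetilde{Y}$), and Proposition \ref{prop:qian} lets us pass from $\widetilde{X},\widetilde{Y}$ to the pure martingales $Z,V$ via a Girsanov expansion of the densities. Everything then reduces to estimating the remainder in the Girsanov formula.

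First I would apply the triangle inequality
\begin{equation*}
\dtv(X^x_t, Y^x_t) \le \dtv(X^x_t, \widetilde{X}^x_t) + \dtv(\widetilde{X}^x_t, \widetilde{Y}^x_t) + \dtv(\widetilde{Y}^x_t, Y^x_t).
\end{equation*}
Lemma \ref{lemma:dTV_X_Y} (and its symmetric analogue for $Y$) bounds the first and third terms by $C(1+|b_i(0,x)|^2) t$; since $b_i$ is Lipschitz, $|b_i(0,x)|\le C(1+|x|)$, and using $t\le T$ to trade one factor of $t^{1/2}$ for a constant, these two contributions are absorbed into $Ce^{c|x|^2} t^{1/2}$.

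For the middle term, I would use Proposition \ref{prop:qian} applied to both $\widetilde{X}$ and $\widetilde{Y}$ (with $Z,V$ as the respective driftless companions) to write
\begin{equation*}
p_{_{\widetilde{X}}}(0,t,x,y) - p_{_{\widetilde{Y}}}(0,t,x,y) = \bigl(p_{_Z}(0,t,x,y) - p_{_V}(0,t,x,y)\bigr) + R_1(x,y) - R_2(x,y),
\end{equation*}
where $R_i$ is the Girsanov integral term appearing in \eqref{eq:girsanov_p}. Integrating in $y$ and using the $L^1$ form of the total variation distance,
\begin{equation*}
\dtv(\widetilde{X}^x_t, \widetilde{Y}^x_t) \le \dtv(Z^x_t, V^x_t) + \int_{\mathbb{R}^d} |R_1(x,y)|\,dy + \int_{\mathbb{R}^d}|R_2(x,y)|\,dy.
\end{equation*}
Each remainder is bounded using \eqref{eq:girsanov_b} by $Ce^{C\|\tilde{b}_i^x\|_\infty^2} e^{-c|y-x|^2/t}/t^{(d-1)/2}$, and integrating the Gaussian kernel in $y$ produces a factor $t^{d/2}$, leaving $t^{1/2}$. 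Since by \eqref{eq:b_tilde_bounded} we have $\|\tilde{b}_i^x\|_\infty \le C(1+|x|)$, the exponential factor is controlled by $Ce^{c|x|^2}$, yielding $\int|R_i(x,y)|\,dy \le Ce^{c|x|^2} t^{1/2}$.

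Collecting the three contributions gives the claim. The main subtle point is ensuring that the constants $C,c$ inherited from Theorem \ref{thm:friedman} applied to $Z$ and $V$ do not depend on the cutoff radius $R$ or on $\|\tilde b_i^x\|_\infty$; this is precisely why Proposition \ref{prop:qian} is formulated in terms of the driftless density $p_{_Z}$, whose Aronson bounds depend only on $\sigma_1$. With that taken care of, the quadratic-in-$|x|$ exponential blow-up only enters through the explicit $e^{c|x|^2}$ prefactor, which matches the form of the asserted bound.
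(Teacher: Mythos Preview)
Your proposal is correct and essentially identical to the paper's proof: the paper writes a single five-term triangle inequality $\dtv(X^x_t,Y^x_t)\le \dtv(X^x_t,\widetilde{X}^x_t)+\dtv(\widetilde{X}^x_t,Z^x_t)+\dtv(Z^x_t,V^x_t)+\dtv(V^x_t,\widetilde{Y}^x_t)+\dtv(\widetilde{Y}^x_t,Y^x_t)$, whereas you nest a three-term inequality and then expand the middle via the density identity, but the resulting bounds and the use of Lemma~\ref{lemma:dTV_X_Y}, Proposition~\ref{prop:qian} and \eqref{eq:girsanov_b} are the same.
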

\begin{proof}
Let us write
\begin{align*}
\dtv(X^x_t, Y^x_t) \le \dtv(X^x_t, \widetilde{X}^x_t) + \dtv(\widetilde{X}^x_t,Z^x_t) + \dtv(Z^x_t,V^x_t) + \dtv(V^x_t, \widetilde{Y}^x_t) + \dtv(\widetilde{Y}_t^x, Y_t^x),
\end{align*}
where $\widetilde{X}$ and $\widetilde{Y}$ are defined in \eqref{eq:def_Y:1} and \eqref{eq:def_Y:2}.
Using Lemma \ref{lemma:dTV_X_Y} with \eqref{eq:b_tilde_bounded}, we have
$$ \dtv(X^x_t, \widetilde{X}^x_t) + \dtv(\widetilde{Y}_t^x, Y_t^x)  \le C(1+|x|^2)t .$$
Using the formula \eqref{eq:girsanov_p} and the inequality \eqref{eq:girsanov_b}, we have
\begin{align*}
\dtv(\widetilde{X}^x_t, Z^x_t) & {=} \int_{\mathbb{R}^d} |p_{_{\widetilde{X}}}(0,t,x,y) {-} p_{_Z}(0,t,x,y)| dy = \int_{\mathbb{R}^d} \left|\int_0^t \mathbb{E}\left[U_s^x \langle \tilde{b}_1^x(s,Z_s^x), \nabla_x p_{_Z}(s,t,Z_s^x,y) \rangle \right] ds \right| dy \\
& \le Ce^{C\|\tilde{b}_1^x\|_\infty^2} t^{1/2} \int_{\mathbb{R}^d} \frac{e^{-c|x-y|^2/t}}{t^{d/2}} dy \le Ce^{C|x|^2} t^{1/2},
\end{align*}
where we used \eqref{eq:b_tilde_bounded}. The term $\dtv(\widetilde{Y}^x_t, V^x_t)$ is treated likewise.
\end{proof}

We now prove Theorem \ref{thm:main:1}.
\begin{proof}
Let us introduce an artificial regularization. For $\varepsilon>0$ and using Lemma \ref{lemma:cut_drift} we have
\begin{align}
\label{eq:dTV_D_6}
\dtv(X_t^x, Y_t^x) & \le Ce^{C|x|^2}t^{1/2} + \dtv(Z^x_t, Z^x_t + \sqrt{\varepsilon} \zeta) + \dtv(Z^x_t + \sqrt{\varepsilon} \zeta, V^x_t + \sqrt{\varepsilon}\zeta) + \dtv(V^x_t+ \sqrt{\varepsilon}\zeta, V^x_t)
\end{align}
where $\zeta \sim \mathcal{N}(0,I_d)$ and is independent of the Brownian motion $W$.

\smallskip

$\bullet$ Let $f:\mathbb{R}^d \to \mathbb{R}$ be measurable and bounded and let us define
\begin{equation}
\label{eq:varphi_def}
\varphi : y \in \mathbb{R}^d \mapsto \mathbb{E} f(Z^x_t + y) = \int_{\mathbb{R}^d} f(\xi + y) p_{_Z}(0,t,x,\xi)d\xi = \int_{\mathbb{R}^d} f(\xi)p_{_Z}(0,t,x,\xi-y)d\xi .
\end{equation}
Then $\varphi$ is $\mathcal{C}^2$ with
$$ \nabla^2 \varphi(y) = \int_{\mathbb{R}^d} f(\xi) \nabla^2_y p_{_Z}(0,t,x,\xi-y) d\xi .$$
Moreover, using Theorem \ref{thm:friedman}, we have
$$ \| \nabla^2_y p_{_Z}(0,t,x,\xi-y) \| \le \frac{C}{t^{(d+2)/2}} e^{-c|x-\xi+y|^2/t} ,$$
where the constants $C$ and $c$ do not depend on $\tilde{b}_1^x$. This implies that for every $y \in \mathbb{R}^d$,
$$ \|\nabla^2 \varphi(y)\|_\infty \le C\|f\|_\infty t^{-1} \int_{\mathbb{R}^d} \frac{1}{t^{d/2}} e^{-c|x-\xi+y|^2/t}d\xi \le C\|f\|_\infty t^{-1} .$$
Then using the Taylor formula, for every $y \in \mathbb{R}^d$ there exists $\tilde{y} \in (0,y)$ such that
$$ \varphi(y) = \varphi(0) + \nabla \varphi(0) \cdot y + \frac{1}{2} \nabla^2 \varphi(\tilde{y}) \cdot y^{\otimes 2} $$
and then for some random $\tilde{\zeta} \in (0,\zeta)$ we have
\begin{align*}
|\mathbb{E}f(Z^x_t + \sqrt{\varepsilon}\zeta) - \mathbb{E}f(Z^x_t)| & = |\mathbb{E} \varphi(\sqrt{\varepsilon}\zeta) - \varphi(0)| = \left| \sqrt{\varepsilon} \mathbb{E}[\nabla \varphi(0) \cdot \zeta] + \frac{\varepsilon}{2} \mathbb{E}[ \nabla^2 \varphi(\sqrt{\varepsilon}\tilde{\zeta}) \cdot \zeta^{\otimes 2}] \right| \\
& \le C\varepsilon\|f\|_\infty t^{-1},
\end{align*}
where we used that $\mathbb{E}[\nabla\varphi(0) \cdot \zeta] = \nabla \varphi(0) \cdot \mathbb{E}[\zeta] = 0$. This way we obtain
$$ \dtv(Z^x_t, Z^x_t + \sqrt{\epsilon}\zeta) \le C\varepsilon t^{-1} .$$
The term $\dtv(V^x_t, V^x_t + \sqrt{\epsilon}\zeta)$ is treated likewise.

\smallskip

$\bullet$ Let $f:\mathbb{R}^d \to \mathbb{R}$ be measurable and bounded and let us define
\begin{equation}
\label{eq:f_epsilon_def}
f_\varepsilon : y \mapsto \mathbb{E}f(y + \sqrt{\varepsilon}\zeta) = \frac{1}{(2\pi)^{d/2}} \int_{\mathbb{R}^d} f(y+\sqrt{\varepsilon}\xi) e^{-|\xi|^2/2} d\xi = \frac{1}{(2\pi \varepsilon)^{d/2}} \int_{\mathbb{R}^d} f(\xi) e^{-|\xi-y|^2/(2\varepsilon)} d\xi.
\end{equation}
Then $f_\varepsilon$ is $\mathcal{C}^1$ with
\begin{align*}
\nabla f_\varepsilon(y) & = \frac{1}{(2\pi\varepsilon)^{d/2}} \int_{\mathbb{R}^d} f(\xi) \frac{\xi-y}{\varepsilon} e^{-|\xi-y|^2/(2\varepsilon)} d\xi = \frac{\varepsilon^{-1/2}}{(2\pi)^{d/2}} \int_{\mathbb{R}^d} f(y+\sqrt{\varepsilon}\xi) \xi e^{-|\xi|^2/2} d\xi \\
& = \varepsilon^{-1/2} \mathbb{E}[f(y+\sqrt{\varepsilon}\zeta) \zeta]
\end{align*}
and then
\begin{equation}
\label{eq:f_epsilon_lip}
\lip{f_\varepsilon} \le \|f\|_\infty \varepsilon^{-1/2} \mathbb{E}|\zeta| \le C\|f\|_\infty \varepsilon^{-1/2} .
\end{equation}
So that
\begin{align}
& |\mathbb{E}f(Z^x_t+\sqrt{\varepsilon}\zeta) - \mathbb{E}f(V^x_t+\sqrt{\varepsilon}\zeta)| = |\mathbb{E}f_\varepsilon(Z^x_t) - \mathbb{E}f_\varepsilon(V^x_t)| \nonumber \\
\label{eq:D_3}
& \quad \le \frac{C\|f\|_\infty}{\sqrt{\varepsilon}} \|Z^x_t - V^x_t\|_1 \le \frac{C\|f\|_\infty}{\sqrt{\varepsilon}} (t + t^{1/2}\Delta\sigma(x)) ,
\end{align}
where we used Lemma \ref{lemma:strong_error} in the Appendix.
This implies that
$$ \dtv(Z^x_t+\sqrt{\varepsilon}\zeta, V^x_t+\sqrt{\varepsilon}\zeta) \le C\varepsilon^{-1/2}(t + t^{1/2}\Delta\sigma(x)) .$$

\smallskip

$\bullet$ \textbf{Conclusion :} Considering \eqref{eq:dTV_D_6}, we have
\begin{align*}
\dtv(X^x_t,Y^x_t) \le Ce^{C|x|^2}t^{1/2} + C\varepsilon t^{-1} + C\varepsilon^{-1/2}(t + t^{1/2}\Delta\sigma(x)).
\end{align*}
We now choose $\varepsilon = \big[t(t+t^{1/2}\Delta\sigma(x))\big]^{2/3}$, so that
$$ \dtv(X^x_t, Y^x_t) \le Ce^{C|x|^2}t^{1/2} + C(t^{1/2} + \Delta\sigma(x))^{2/3} .$$

\end{proof}

\subsection{Proof of Theorem \ref{thm:main:2} using Theorem \ref{thm:dtv_bound_L1_density}}
\label{subsec:proof:2}

We first prove Theorem \ref{thm:dtv_bound_L1_density}.
\begin{proof}
Let $f:\mathbb{R}^d\to\mathbb{R}$ be measurable and bounded, let $\varepsilon>0$ and let $\zeta \sim \mathcal{N}(0,I_d)$ be independent of $(Z_1,Z_2)$. We have
\begin{align}
|\mathbb{E}f(Z_1) - \mathbb{E}f(Z_2)| & \le \left|\mathbb{E}f(Z_1) - \sum_{i=1}^r w_i \mathbb{E}f_{\varepsilon/n_i}(Z_1) \right| + \left|\sum_{i=1}^r w_i \mathbb{E}f_{\varepsilon/n_i}(Z_1) - \sum_{i=1}^r w_i \mathbb{E}f_{\varepsilon/n_i}(Z_2) \right| \nonumber \\
\label{eq:TI_Z1_Z2}
& \quad + \left|\sum_{i=1}^r w_i \mathbb{E}f_{\varepsilon/n_i}(Z_2) - \mathbb{E}f(Z_2)\right|,
\end{align}
where $f_\varepsilon$ is defined as in \eqref{eq:f_epsilon_def} and where the $n_i$'s and the $w_i$'s will be defined later.

Let $\varphi$ be as defined in \eqref{eq:varphi_def} replacing $Z^x_t$ by $Z_1$.
Then, $\varphi$ is differentiable up to the order $2r$ and for all $k=0,1,\ldots,2r$:
$$ \nabla^k \varphi(y) = (-1)^k \int_{\mathbb{R}^d} f(\xi) \nabla^k p_{1}(\xi-y)d\xi .$$
Using the Taylor formula up to order $2r$, for every $y \in \mathbb{R}^d$ there exits $\tilde{y} \in (0,y)$ such that
$$ \varphi(y) = \varphi(0) + \sum_{k=1}^{2r-1} \frac{\nabla^k \varphi(0)}{k!} \cdot y^{\otimes k} + \frac{\nabla^{2r} \varphi(\tilde{y})}{(2r)!} \cdot y^{\otimes 2r} .$$
Moreover, we have
\begin{equation}
\label{eq:ML_inequality}
\left|\nabla^{2r} \varphi(\tilde{y}) \cdot y^{\otimes 2r} \right| \le C\|f\|_\infty  |y|^{2r} \int_{\mathbb{R}^d} \| \nabla^{2r}p_1(\xi) \| d\xi .
\end{equation}
Then there exists a random $\tilde{\zeta} \in (0,\zeta)$ such that
\begin{align}
& \mathbb{E}f(Z_1+\sqrt{\varepsilon}\zeta)-\mathbb{E}f(Z_1) = \mathbb{E}\varphi(\sqrt{\varepsilon}\zeta ) - \varphi(0) = \sum_{k=1}^{2r-1} \frac{\nabla^k \varphi(0)}{k!} \varepsilon^{k/2} \cdot \mathbb{E} [\zeta^{\otimes k}] + \frac{\mathbb{E}[\nabla^{2r} \varphi(\sqrt{\varepsilon}\tilde{\zeta}) \cdot \zeta^{\otimes 2r}] }{(2r)!} \varepsilon^r \nonumber \\
\label{eq:D_2_bound:2}
& \quad = \sum_{k=1}^{r-1} \frac{\nabla^{2k} \varphi(0)}{(2k)!} \varepsilon^{k} \cdot \mathbb{E} [\zeta^{\otimes 2k}] + \frac{\mathbb{E}[\nabla^{2r} \varphi(\sqrt{\varepsilon}\tilde{\zeta}) \cdot \zeta^{\otimes 2r}] }{(2r)!} \varepsilon^r =: \sum_{k=1}^{r-1} \beta_k(t)\varepsilon^k + \tilde{\beta}_r(t,\varepsilon)\varepsilon^r,
\end{align}
because if $k$ is odd, then $\mathbb{E}[\zeta^{\otimes k}] = 0$.
We now rely on a multi-step Richardson-Romberg extrapolation \cite[Appendix A]{lemaire2017}. Let us denote the refiners $n_i=2^{i-1}$ and the auxiliary sequences and weights
\begin{equation}
u_k := \left(\prod_{\ell=1}^{k-1} (1-2^{-\ell})\right)^{-1}, \ v_k := (-1)^k 2^{-k(k+1)/2} u_{k+1}, \ w_k := u_k v_{r-k}, \ k=1,\ldots,r .
\end{equation}
These weights are the unique solution to the $r \times r$ Vandermonde system
\begin{equation}
\label{eq:vandermonde}
\sum_{i=1}^r w_i n_i^{-k} = \left\lbrace \begin{array}{ll} 1 & \text{ if } k=0, \\ 0 & \text{else.} \end{array} \right. , \quad k=0,1,\ldots,r-1 .
\end{equation}
Then we have
\begin{align}
\sum_{i=1}^r w_i \left(\mathbb{E}f(Z_1+\sqrt{\varepsilon/n_i}\zeta) - \mathbb{E}f(Z_1)\right) & = \sum_{i=1}^r w_i \sum_{k=1}^{r-1} \beta_k(t) \varepsilon^k n_i^{-k} + \sum_{i=1}^r w_i \tilde{\beta}_r(t,\varepsilon/n_i)\varepsilon^r n_i^{-r} \nonumber \\
& = \sum_{k=1}^{r-1} \varepsilon^k \beta_k(t) \sum_{i=1}^r w_in_i^{-k} + \varepsilon^r \sum_{i=1}^r \tilde{\beta}_r(t,\varepsilon/n_i)w_i n_i^{-r} \nonumber \\
\label{eq:ML_expansion}
& = \varepsilon^r \sum_{i=1}^r \tilde{\beta}_r(t,\varepsilon/n_i)w_i n_i^{-r},
\end{align}
where we used \eqref{eq:vandermonde} in the last equation.
Now, using \eqref{eq:ML_inequality} we have
$$ \left|\sum_{i=1}^r \tilde{\beta}_r(t,\varepsilon/n_i)w_i n_i^{-r} \right| \le C\|f\|_\infty \left(\int_{\mathbb{R}^d} \|\nabla^{2r} p_1(\xi) \| d\xi \right) \sum_{i=1}^r |w_i| n_i^{-r} .$$
Since $u_k \to u_\infty = \prod_{\ell \ge 1} (1-2^{-\ell})^{-1} < \infty$, the weights satisfy
$$ |w_i| \le u_\infty^2 2^{-(r-i)(r-i+1)/2}, \quad i=1,\ldots,r, $$
so that
\begin{equation}
\label{eq:wi_ni_b}
\sum_{i=1}^r \frac{|w_i|}{n_i^r} \le u_\infty^2 \sum_{i=1}^r 2^{-(r-i)(r-i+1)/2} \le u_\infty^2 \sum_{i=1}^r 2^{(r-i)/2} = u_\infty^2 \sum_{i=0}^{r-1}2^{-i/2} \le C.
\end{equation}
As a consequence and since $\sum_{i=1}^r w_i =1$, we may write from \eqref{eq:ML_expansion}
\begin{equation}
\label{eq:ML:2}
\left| \mathbb{E}f(Z_1) - \sum_{i=1}^r w_i \mathbb{E}f_{\varepsilon/n_i}(Z_1)\right| \le C\|f\|_\infty \varepsilon^r \int_{\mathbb{R}^d} \|\nabla^{2r} p_1(\xi) \| d\xi .
\end{equation}
The same way, we obtain
$$ \left|\mathbb{E}f(Z_2) - \sum_{i=1}^r w_i \mathbb{E}f_{\varepsilon/n_i}(Z_2) \right| \le C\|f\|_\infty \varepsilon^r \int_{\mathbb{R}^d} \|\nabla^{2r} p_2(\xi) \| d\xi.$$

On the other side, using \eqref{eq:f_epsilon_lip} we have
\begin{equation}
\label{eq:D3:thm2}
\left|\sum_{i=1}^r w_i \mathbb{E}f_{\varepsilon/n_i}(Z_1) - \sum_{i=1}^r w_i \mathbb{E}f_{\varepsilon/n_i}(Z_2) \right| \le \frac{C\|f\|_\infty}{\sqrt{\varepsilon}} \mathcal{W}_1(Z_1,Z_2) \left(\sum_{i=1}^r |w_i| 2^{(i-1)/2} \right). 
\end{equation}
Moreover, for every $i=1,\ldots,r$,
$$ |w_i| 2^{(i-1)/2} \le u_\infty^2 2^{-(r-i)(r-i+1)/2 + (i-1)/2} $$
and then
\begin{equation}
\label{eq:wi_2_i_b}
\sum_{i=1}^r |w_i| 2^{(i-1)/2} \le u_\infty^2 \sum_{i=1}^r 2^{(i-1)/2} \le u_\infty^2 2^r .
\end{equation}
Thus considering \eqref{eq:TI_Z1_Z2}, we obtain for every $\varepsilon > 0$,
$$ \dtv(Z_1,Z_2) \le C \varepsilon^r \int_{\mathbb{R}^d} \left( \| \nabla^{2r} p_1(\xi) \| + \| \nabla^{2r} p_2(\xi) \| \right) d\xi + C \varepsilon^{-1/2} \mathcal{W}_1(Z_1,Z_2). $$
Optimizing in $\varepsilon$ gives
$$ \varepsilon_\star = \left( \mathcal{W}_1(Z_1,Z_2)/ (2r \int_{\mathbb{R}^d} \left( \| \nabla^{2r} p_1(\xi) \| + \| \nabla^{2r} p_2(\xi) \| \right) d\xi) \right)^{2/(2r+1)} $$
and then
$$ \dtv(Z_1,Z_2) \le C_{d,r} \mathcal{W}_1(Z_1,Z_2)^{2r/(2r+1)} \left( \int_{\mathbb{R}^d} \left( \| \nabla^{2r} p_1(\xi) \| + \| \nabla^{2r} p_2(\xi) \| \right) d\xi \right)^{1/(2r+1)}. $$
\end{proof}

We now prove Theorem \ref{thm:main:2}.
\begin{proof}
Using Lemma \ref{lemma:cut_drift}, we have
\begin{align}
\label{eq:dTV_D_6:2}
\dtv(X_t^x, Y_t^x) & \le Ce^{C|x|^2}t^{1/2} + \dtv(Z^x_t,V^x_t)
\end{align}
We now apply Theorem \ref{thm:dtv_bound_L1_density} with the random vectors $Z_1=Z^x_t$ and $Z_2=V^x_t$. Assuming that $\sigma_1$ is $C^{2r}_b$ and using Theorem \ref{thm:friedman}, $\nabla^k_y p_{_Z}$ exists for $k=0,1,\ldots,2r$ and
$$ \forall k=0,1,\ldots,2r, \ \forall t \in (0,T], \ \forall x,y \in \mathbb{R}^d, \ \| \nabla^k_y p_{_Z}(0,t,x,y) \| \le \frac{C}{t^{(d+k)/2}} e^{-c|y-x|^2/t} .$$
Then we have
$$ \int_{\mathbb{R}^d} \nabla^{2r}_y p_{_Z}(0,t,x,\xi) d\xi \le Ct^{-r} \int_{\mathbb{R}^d} \frac{1}{t^{d/2}} e^{-c|x-\xi+y|^2/t} d\xi \le Ct^{-r}. $$
The same way we have
$$ \int_{\mathbb{R}^d} \nabla^{2r}_y p_{_V}(0,t,x,\xi) d\xi \le Ct^{-r} .$$
Applying Theorem \ref{thm:dtv_bound_L1_density} with Lemma \ref{lemma:strong_error} yields
$$ \dtv(Z^x_t, V^x_t) \le C(\sqrt{t}+\Delta\sigma(x))^{2r/(2r+1)} .$$
%
\end{proof}

\subsection{Proof of Theorem \ref{thm:main:2:menozzi}}

For the proof of Theorem \ref{thm:main:2:menozzi}, we do not use Lemma \ref{lemma:cut_drift}; instead we directly apply Theorem \ref{thm:dtv_bound_L1_density}.
Using Theorem \ref{thm:menozzi}, $\nabla^k_y p_{_X}$ and $\nabla^k_y p_{_Y}$ exist for $k=0,1,\ldots,2r$ and satisfy the same bounds as previously. Then using Theorem \ref{thm:dtv_bound_L1_density} with Lemma \ref{lemma:strong_error} we obtain
$$ \dtv(X^x_t, Y^x_t) \le C(\sqrt{t}(1+\Delta b(x)) + \Delta \sigma(x) + t|b(x)|)^{2r/(2r+1)} .$$

\subsection{Proof of Theorem \ref{thm:main:3}}

\begin{proof}
We use Lemma \ref{lemma:cut_drift} again and rework the bound on $\dtv(Z^x_t, V^x_t)$ by paying attention to the dependency of the constants in $r$ in the proof of Theorem \ref{thm:dtv_bound_L1_density} with $Z_1:=Z^x_t$ and $Z_2:=V^x_t$.
Since $\sigma_1 \in \mathcal{C}^{2r}_{b}$ for every $r \in \mathbb{N}$, we write \eqref{eq:D_2_bound:2} for any $r \in \mathbb{N}$ and we have
\begin{align*}
& |\tilde{\beta}_r(t,\varepsilon)| \le \widetilde{C}_{2r} \|f\|_\infty t^{-r} \frac{\mathbb{E}[|\zeta|^{2r}]}{(2r)!} , \quad \widetilde{C}_{2r} := C_{2r}c_{2r}^{-d/2},
\end{align*}
where $C_{2r}$ and $c_{2r}$ are defined in \eqref{eq:assumption_C_tilde} and where
\begin{align*}
& \mathbb{E}[|\zeta|^{2r}] = \frac{2^r \Gamma(d/2 + r)}{\Gamma(d/2)} = \prod_{i=0}^{r-1} (d+2i) .
\end{align*}
Using \eqref{eq:wi_ni_b} we get
$$ \left|\sum_{i=1}^r \tilde{\beta}_r(t,\varepsilon/n_i)w_i n_i^{-r} \right| \le C \widetilde{C}_{2r} \|f\|_\infty t^{-r} \frac{\prod_{i=0}^{r-1} (d+2i)}{(2r)!} $$
and we obtain as in \eqref{eq:ML:2}:
\begin{align*}
& \left| \mathbb{E}f(Z^x_t) - \sum_{i=1}^r w_i \mathbb{E}f_{\varepsilon/n_i}(Z^x_t)\right| \le \frac{1}{2} \kappa_1 \|f\|_\infty \varepsilon^r t^{-r}, \quad \kappa_1 := C\widetilde{C}_{2r}\frac{\prod_{i=0}^{r-1} (d+2i)}{(2r)!} \\
& \left|\mathbb{E}f(V^x_t) - \sum_{i=1}^r w_i \mathbb{E}f_{\varepsilon/n_i}(V^x_t) \right| \le \frac{1}{2} \kappa_1 \|f\|_\infty \varepsilon^r t^{-r}.
\end{align*}
On the other hand, considering \eqref{eq:D3:thm2} and \eqref{eq:wi_2_i_b} with Lemma \ref{lemma:strong_error} with $\Delta \sigma(x) = 0$ we have
\begin{equation}
\left|\sum_{i=1}^r w_i \mathbb{E}f_{\varepsilon/n_i}(V^x_t) - \sum_{i=1}^r w_i \mathbb{E}f_{\varepsilon/n_i}(Z^x_t) \right| \le \kappa_2 \frac{\|f\|_\infty}{\sqrt{\varepsilon}} t, \quad \kappa_2 := C2^r .
\end{equation}
We now minimize $\kappa_1 \varepsilon^r t^{-r} + \kappa_2 \varepsilon^{-1/2}t$ in $\varepsilon$, giving
$$ \varepsilon_\star = \frac{t^{(2r+2)/(2r+1)}}{(2r\kappa_1)^{2/(2r+1)}} \kappa_2^{2/(2r+1)} $$
and then
\begin{align*}
\kappa_1 \varepsilon_\star^r t^{-r} + \kappa_2 \varepsilon_\star^{-1/2}t \le C \kappa_2^{2r/(2r+1)} \kappa_1^{1/(2r+1)} t^{r/(2r+1)}
\end{align*}
with as $r \to \infty$:
\begin{align*}
\kappa_2^{2r/(2r+1)} \kappa_1^{1/(2r+1)} \sim \widetilde{C}_{2r}^{1/(2r+1)} \left( \prod_{i=0}^{r-1} (d+2i) \right)^{1/(2r+1)} \frac{1}{(2r)!^{1/(2r+1)}} 2^{2r^2/(2r+1)}
\end{align*}
with
\begin{align*}
& \left( \prod_{i=0}^{r-1} (d+2i) \right)^{\frac{1}{(2r+1)}} = \exp\left(\frac{r}{2r {+} 1} \frac{1}{r} \sum_{i=0}^{r-1} \log(d+2i) \right) \le \exp\left(\frac{r}{2r{+}1} \log(d{+}(r{-}1)) \right) \le \sqrt{d{+}r{-}1}, \\
& \frac{1}{(2r)!^{1/(2r+1)}} \sim \frac{e}{2r}, \quad \limsup_{r \to \infty} \widetilde{C}_{2r}^{1/(2r+1)} < \infty
\end{align*}
where we used Assumption \eqref{eq:assumption_C_tilde}, so that
$$ \kappa_2^{2r/(2r+1)} \kappa_1^{1/(2r+1)} \le C \sqrt{d+r-1} \frac{e}{2r} 2^r .$$
Then we have $\dtv(Z^x_t, V^x_t) \le C2^r r^{-1/2} t^{r/(2r+1)}$ and we choose $r(t) = \lfloor \log^{1/2}(1/t) \rfloor$ so that as $t \to 0$,
$$ \dtv(Z^x_t, V^x_t) \le C t^{1/2} \exp\left(C\sqrt{\log(1/t)}\right) .$$

\end{proof}

\section{Counterexample}

In this section we give a counter-example showing that we cannot achieve a bound better than $t^{1/2}$ in general. More specifically, we show that we cannot achieve a bound better than $t^{1/2}$ for the total variation between an SDE and its Euler-Maruyama-scheme in general.
For $x >0$ and $\sigma > 0$, let us consider the one-dimensional process
\begin{equation}
\label{eq:counter_example_def}
Y^x_t = xe^{\sigma W_t},
\end{equation}
where $W$ is a standard Brownian motion. The process $Y$ is solution of the SDE $dY^x_t = (\sigma^2/2) Y^x_t dt + \sigma Y^x_t dW_t$ and its associated Euler-Maruyama schemes reads
\begin{equation}
\label{eq:geometric_BM}
\bar{Y}^x_t = x + (\sigma^2/2) x t + \sigma x W_t \sim \mathcal{N}\left(x(1+t\sigma^2/2),\sigma^2x^2t\right).
\end{equation}

\begin{proposition}
Let $Y$ be the process defined in \eqref{eq:counter_example_def}. Then for small enough $t$ we have
\begin{equation}
\dtv(Y^x_t,\bar{Y}^x_t) \ge C_x t^{1/2}.
\end{equation}
\end{proposition}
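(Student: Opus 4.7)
The plan is to exhibit a single Borel set $A \subset \mathbb{R}$ on which $|\mathbb{P}(Y^x_t \in A) - \mathbb{P}(\bar Y^x_t \in A)|$ is bounded below by a positive constant times $\sqrt{t}$ for small $t$, then conclude by the identity $\dtv(\pi_1,\pi_2) = 2\sup_A|\pi_1(A)-\pi_2(A)|$ recalled in the notations. The natural choice is $A = (-\infty, x]$: by symmetry of $W_t$, the value $x$ is exactly the median of $Y^x_t$, whereas it lies strictly below the median $x(1+\sigma^2 t/2)$ of the Gaussian Euler scheme, and this discrepancy will generate the $\sqrt{t}$ gap.

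Concretely, $\mathbb{P}(Y^x_t \le x) = \mathbb{P}(\sigma W_t \le 0) = \tfrac12$ for every $t>0$, while from \eqref{eq:geometric_BM} one has $\{\bar Y^x_t \le x\} = \{\sigma x W_t \le -x\sigma^2 t/2\} = \{W_t \le -\sigma t/2\}$, so $\mathbb{P}(\bar Y^x_t \le x) = \Phi(-\sigma\sqrt{t}/2)$, where $\Phi$ denotes the standard normal c.d.f. A first-order Taylor expansion gives $\Phi(-\sigma\sqrt{t}/2) = \tfrac12 - \tfrac{\sigma}{2\sqrt{2\pi}}\sqrt{t} + O(t)$ as $t\downarrow 0$, hence
$$
\dtv(Y^x_t, \bar Y^x_t) \;\ge\; 2\bigl|\mathbb{P}(Y^x_t \le x) - \mathbb{P}(\bar Y^x_t \le x)\bigr| \;=\; \frac{\sigma}{\sqrt{2\pi}}\sqrt{t} + O(t),
$$
which is at least $C_x \sqrt{t}$ for $t$ sufficiently small.

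There is essentially no obstacle: once the test set $A=(-\infty,x]$ has been singled out, the argument reduces to an explicit Gaussian computation. The conceptual content is that although $\bar Y^x_t$ matches the mean and the variance of $Y^x_t$ up to higher order corrections in $t$, its median is shifted by an amount of order $t$ while the common scale of fluctuations is of order $\sqrt{t}$; the ratio of these two scales is exactly the $\sqrt{t}$ gap in the two c.d.f.s at the common point $x$, and this gap is immediately picked up by the total variation distance.
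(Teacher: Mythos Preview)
Your proof is correct, and it is genuinely different from the paper's argument. The paper works with the $L^1$ representation $\dtv(Y^x_t,\bar Y^x_t)=\int|p_{_Y}-p_{_{\bar Y}}|$: it writes down the log-normal density of $Y^x_t$, substitutes $y=x+\sqrt{t}\,u$, Taylor-expands both densities in $\sqrt{t}$, and shows the leading term is $\sqrt{t}\,e^{-u^2/(2\sigma^2 x^2)}$ times a nonzero odd polynomial in $u$, so that integrating $|\cdot|$ over a small symmetric interval already yields a positive multiple of $\sqrt{t}$. You instead use the sup-over-sets formula with the single half-line $A=(-\infty,x]$ and exploit the exact identity $\mathbb{P}(Y^x_t\le x)=\tfrac12$; everything then reduces to the one-line expansion $\Phi(-\sigma\sqrt{t}/2)=\tfrac12-\tfrac{\sigma}{2\sqrt{2\pi}}\sqrt{t}+O(t)$.

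Your route is shorter and avoids any density manipulation; it also makes the mechanism transparent (the median of $Y^x_t$ is $x$, that of $\bar Y^x_t$ is $x(1+\sigma^2 t/2)$, and the common scale is $\sqrt{t}$). As a bonus, your constant $\sigma/\sqrt{2\pi}$ is explicit and in fact independent of $x$, which is slightly stronger than the statement. The paper's density expansion, on the other hand, gives finer local information on $p_{_Y}-p_{_{\bar Y}}$ near $y=x$, which is not needed here but could be of independent interest.
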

\begin{proof}
We have
\begin{equation}
p_{_Y}(t,x,y) = \frac{1}{\sqrt{2\pi\sigma^2 t}} \frac{\exp\left(-\frac{1}{2\sigma^2t}\log^2(y/x)\right)}{y} \mathds{1}_{y \ge 0}
\end{equation}
so that
\begin{align*}
& \dtv(Y^x_t, \bar{Y}^x_t) = \frac{1}{\sqrt{2\pi\sigma^2 t}} \int_{\mathbb{R}} \left|\exp\left(-\frac{\log^2(y/x)}{2\sigma^2t}\right) y^{-1} \mathds{1}_{y \ge 0} - \exp\left(-\frac{(y-x-xt\sigma^2/2)^2}{2\sigma^2 x^2 t}\right)x^{-1} \right|dy \\
& \quad \quad \ge \frac{1}{\sqrt{2\pi \sigma^2}} \int_{-x/\sqrt{t}}^\infty \left|\frac{1}{x+\sqrt{t}y} \exp\left(-\frac{\log^2(1+\sqrt{t}y/x)}{2\sigma^2 t}\right) - \frac{1}{x}\exp\left(-\frac{(y-x\sqrt{t}\sigma^2/2)^2}{2\sigma^2 x^2}\right) \right| dy.
\end{align*}
But we have as $(t,y) \to 0$:
\begin{align*}
& \frac{1}{1+\sqrt{t}y/x} \exp\left(-\frac{\log^2(1+\sqrt{t}y/x)}{2\sigma^2 t}\right) - \exp\left(-\frac{(y-x\sqrt{t}\sigma^2/2)^2}{2\sigma^2 x^2}\right) \\
& \quad = (1- \sqrt{t}y/x + O(ty^2)) \exp\left(-\frac{1}{2\sigma^2 t} (\frac{ty^2}{x^2} - \frac{t^{3/2}y^3}{x^3} + O(t^2 y^4))\right) - \exp\left(- \frac{y^2}{2\sigma^2x^2} - \frac{t\sigma^2}{8} + \frac{\sqrt{t}y}{2\sigma^2 x} \right) \\
& \quad = e^{-\frac{y^2}{2 \sigma^2 x^2}} \left[ (1- \sqrt{t}y/x + O(ty^2)) \left(1+\frac{\sqrt{t}y^3}{2\sigma^2 x^3} + O(ty^4)\right) - \left(1+\frac{\sqrt{t}y}{2\sigma^2 x}-\frac{t\sigma^2}{8} + O(t^2) + O(ty^2) \right) \right] \\
& \quad = e^{-\frac{y^2}{2 \sigma^2 x^2}} \left[ -\frac{\sqrt{t}y}{x} - \frac{\sqrt{t}y}{2\sigma^2 x} + \frac{\sqrt{t}y^3}{2\sigma^2 x^3} + \frac{t\sigma^2}{8} + O(ty^2) + O(t^2) \right].
\end{align*}
Thus there exists $\epsilon >0$ and $t_0$ such that for every $t \le t_0$:
\begin{align*}
\dtv(Y^x_t, \bar{Y}^x_t) \ge \frac{1}{\sqrt{2\pi\sigma^2 x^2}} e^{-\frac{\varepsilon^2}{2\sigma^2 x^2}} \frac{\sqrt{t}}{2} \int_{-\varepsilon}^{\varepsilon} \left| -\frac{y}{x} - \frac{y}{2\sigma^2 x} + \frac{y^3}{2\sigma^2 x^3} \right| dy,
\end{align*}
so that $\dtv(Y^x_t, \bar{Y}^x_t)$ is of order $t^{1/2}$ as $t \to 0$.
\end{proof}

However, the process $Y$ does not satisfy the assumptions of Theorem \ref{thm:main:2} as its noise coefficient is not elliptic neither bounded on $(0,\infty)$. We then prove the following result.
\begin{proposition}
There exists a diffusion process $X$ on $\mathbb{R}$ with $\mathcal{C}^1$ and Lipschitz continuous drift, with $\mathcal{C}^\infty_b$ and elliptic diffusion coefficient $\sigma$ and there exists $T>0$ and $\varepsilon \in (0,1)$ such that
$$ \forall t \in [0,T], \ \forall x \in (\varepsilon, \varepsilon^{-1}), \quad \dtv(X^x_t, \bar{X}^x_t) \ge C_x t^{1/2} $$
where $\bar{X}$ is the Euler-Maruyama scheme of $X$ and where the positive constant $C_x$ depends on $x$.
\end{proposition}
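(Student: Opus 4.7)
The plan is to transfer the $t^{1/2}$ lower bound just established for the geometric Brownian motion $Y$ to a diffusion whose coefficients meet the global regularity and ellipticity requirements of the statement, by means of a localization/coupling argument. Fix $\varepsilon \in (0,1)$ and set $I := [\varepsilon/2, 2/\varepsilon]$. I would choose functions $\tilde b, \tilde\sigma : \mathbb{R} \to \mathbb{R}$ that agree with $\sigma^2 z/2$ and $\sigma z$ respectively on $I$, with $\tilde\sigma \in \mathcal{C}^\infty_b(\mathbb{R})$ bounded below by some $\ubar\sigma_0>0$ (hence uniformly elliptic), and with $\tilde b$ of class $\mathcal{C}^1(\mathbb{R})$ and globally Lipschitz continuous; both extensions are smoothly flattened to constant values far from the origin so that the assumptions of the proposition are fulfilled. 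Define $X$ as the solution of $dX^x_t = \tilde b(X^x_t)\,dt + \tilde\sigma(X^x_t)\,dW_t$.

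The main step is a pathwise coupling. Building $X^x$ and $Y^x$ on the same probability space driven by the same Brownian motion $W$, let $\tau := \inf\{s \ge 0 : Y^x_s \notin I\}$. Because the coefficients of the two SDEs coincide on $I$, pathwise uniqueness yields $X^x_s = Y^x_s$ for every $s \le t \wedge \tau$, and the coupling inequality gives
\[ \dtv(X^x_t, Y^x_t) \le \mathbb{P}(X^x_t \ne Y^x_t) \le \mathbb{P}(\tau \le t). \]
Since $Y^x_s = x\exp(\sigma W_s)$, for every $x \in (\varepsilon, \varepsilon^{-1})$ the bound $|W_s| \le (\log 2)/\sigma$ implies $Y^x_s \in I$, uniformly in such $x$. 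Consequently $\mathbb{P}(\tau \le t) \le \mathbb{P}\bigl(\sup_{s \le t}|W_s| > (\log 2)/\sigma\bigr) \le Ce^{-c/t}$ for constants $C,c>0$ depending only on $\varepsilon$ and $\sigma$, which is $o(t^{1/2})$ as $t \to 0$.

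Finally, since $x \in (\varepsilon, \varepsilon^{-1}) \subset I$, the coefficients of $X$ and $Y$ agree at $x$, so the one-step Euler schemes coincide as random variables, $\bar X^x_t = x + \tilde b(x)t + \tilde\sigma(x) W_t = \bar Y^x_t$, and hence $\dtv(\bar X^x_t, \bar Y^x_t) = 0$. Combining with the previous proposition through the triangle inequality,
\[ \dtv(X^x_t, \bar X^x_t) \ge \dtv(Y^x_t, \bar Y^x_t) - \dtv(X^x_t, Y^x_t) - \dtv(\bar X^x_t, \bar Y^x_t) \ge C_x t^{1/2} - Ce^{-c/t}, \]
which is at least $(C_x/2)\, t^{1/2}$ for every $t \in [0,T]$ with $T = T(\varepsilon,\sigma)$ small enough, uniformly in $x \in (\varepsilon, \varepsilon^{-1})$. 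The main obstacle is really just the careful construction of $\tilde b$ and $\tilde\sigma$ satisfying all the regularity and ellipticity constraints while exactly matching the coefficients of $Y$ on $I$; once this is done, the remainder is standard coupling and Brownian exit-time analysis.
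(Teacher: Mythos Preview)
Your proposal is correct and follows essentially the same strategy as the paper: localize the coefficients of the geometric Brownian motion on a compact interval, observe that the one-step Euler schemes coincide there, bound $\dtv(X^x_t,Y^x_t)$ by the probability that $Y^x$ exits the interval before time $t$, and conclude via the triangle inequality and the previous proposition. The only notable difference is that you control the exit probability directly through the explicit representation $Y^x_s = x e^{\sigma W_s}$ and the reflection principle, obtaining the sharper bound $Ce^{-c/t}$, whereas the paper reuses the $L^2$/Markov argument of Lemma~\ref{lemma:dTV_X_Y} (together with the symmetry $(Y^x)^{-1}\sim x^{-2}Y^x$) to get a bound of order $t$; either suffices. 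Also note that modifying the drift is unnecessary, since $z\mapsto \sigma^2 z/2$ is already $\mathcal{C}^1$ and globally Lipschitz; the paper only smooths the diffusion coefficient.
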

\begin{proof}
We construct from the geometric Brownian motion $Y$ defined in \eqref{eq:counter_example_def}, a process $X$ with elliptic and bounded drift and such that $\dtv(X^x_t, \bar{X}^x_t) \ge C_x t^{1/2}$. For $\varepsilon \in (0,1/2) $, let us consider $\psi : \mathbb{R} \to \mathbb{R}^+$ a $\mathcal{C}^\infty_b$ approximation of
$$ \widetilde{\psi} : x \in \mathbb{R} \longmapsto \left\lbrace \begin{array}{ll}
x & \text{ if } x \in [\varepsilon, \varepsilon^{-1}], \\
\varepsilon & \text{ if } x \le \varepsilon \\
\varepsilon^{-1} & \text{ if } x \in [\varepsilon^{-1}, \infty)
\end{array} \right. $$
such that $\psi = \widetilde{\psi}$ on $[2\varepsilon, \varepsilon^{-1}/2] \cup (-\infty,\varepsilon/2] \cup [2\varepsilon^{-1}, \infty)$.
Then we define the process with elliptic and bounded noise coefficient
$$ dX^x_t = -\frac{\sigma^2}{2} X^x_t dt + \sigma \psi(X^x_t) dW_t. $$
Then for $x \in (2\varepsilon, \varepsilon^{-1}/2)$ we have $\bar{X}^x_t = \bar{Y}^x_t$ and
\begin{align*}
\mathbb{P}(Y^x_t \ne X^x_t) & \le \mathbb{P}\left( \textstyle \sup_{s \in [0,t]} Y^x_s \ge \varepsilon^{-1}/2 \right) + \mathbb{P}\left( \textstyle \inf_{s \in [0,t]} Y^x_s \le 2\varepsilon \right).
\end{align*}
With a proof similar to the proof of Lemma \ref{lemma:dTV_X_Y}, we show that
$$ \mathbb{P}\left( \textstyle \sup_{s \in [0,t]} Y^x_s \ge \varepsilon^{-1}/2 \right) \le C_{x,\varepsilon} t .$$
Moreover, we remark that $(Y^x)^{-1} \sim x^{-2} Y^x$ in law so
$$ \mathbb{P}\left( \textstyle \inf_{s \in [0,t]} Y^x_s \le 2\varepsilon \right) = \mathbb{P}\left( \textstyle \sup_{s \in [0,t]} (Y^x_s)^{-1} \ge \varepsilon^{-1}/2 \right) = \mathbb{P}\left( \textstyle \sup_{s \in [0,t]} Y^x_s \ge x^2 \varepsilon^{-1}/2 \right) \le C_{x,\varepsilon}t .$$
Then we obtain
\begin{align*}
\dtv(X^x_t,\bar{X}^x_t) \ge \dtv(Y^x_t,\bar{Y}^x_t) - \dtv(X^x_t,Y^x_t) \ge C_x \sqrt{t} .
\end{align*}

\end{proof}

\begin{remark}
We could also consider the process $X$ with "cut" bounded drift $\tilde{b}$ and get the same bounds, proving then that we cannot achieve better bounds in general than the ones established in Theorem \ref{thm:main:3} even if we assume that $b$ is bounded.
\end{remark}

\appendix

\section{Appendix}

\begin{lemma}[\cite{friedman}, Chapter 9, Lemma 7]
\label{lemma:friedman_Lemma_7}
For $a>0$, $0 < u < t \le T$, $x \in \mathbb{R}^d$, $\xi \in \mathbb{R}^d$, let
\begin{align*}
& I_a := \int_{\mathbb{R}^d} \frac{1}{(u(t-u))^{d/2}} \exp\left(-a \left(\frac{|x-y|^2}{t-u} + \frac{|y-\xi|^2}{u} \right) \right) dy.
\end{align*}
Then there exists a constant $C>0$ depending only on $d$ and $T$ such that for every $ 0 < \varepsilon < 1$,
$$ I_a \le \frac{C}{(\varepsilon a t)^{d/2}} \exp \left(-a(1-\varepsilon) \frac{|x-\xi|^2}{t} \right) .$$
\end{lemma}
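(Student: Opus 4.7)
The plan is to evaluate $I_a$ exactly by completing the square in the Gaussian exponent; this will yield a formula independent of $u$ that is strictly stronger than the claimed bound. Setting $A := 1/(t-u)$ and $B := 1/u$, I would first expand
\begin{equation*}
A|x-y|^2 + B|y-\xi|^2 = (A+B)\left| y - \tfrac{Ax + B\xi}{A+B}\right|^2 + \tfrac{AB}{A+B}|x-\xi|^2.
\end{equation*}
Using $A+B = t/(u(t-u))$ and $AB/(A+B) = 1/t$, the exponent rewrites as
\begin{equation*}
\frac{|x-y|^2}{t-u} + \frac{|y-\xi|^2}{u} = \frac{t}{u(t-u)}\bigl| y - y_0 \bigr|^2 + \frac{|x-\xi|^2}{t}, \qquad y_0 := \frac{ux + (t-u)\xi}{t}.
\end{equation*}

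Next I would pull the factor $\exp(-a|x-\xi|^2/t)$ outside the integral and evaluate the remaining centered Gaussian integral in $y$, whose inverse variance is $at/(u(t-u))$. The standard formula $\int_{\mathbb{R}^d} e^{-c|y-y_0|^2}\,dy = (\pi/c)^{d/2}$ produces $\bigl(\pi u(t-u)/(at)\bigr)^{d/2}$, which exactly cancels the prefactor $1/(u(t-u))^{d/2}$ up to a factor of $\pi^{d/2}$, yielding the clean identity
\begin{equation*}
I_a = \left(\frac{\pi}{at}\right)^{d/2} e^{-a|x-\xi|^2/t}.
\end{equation*}

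The stated inequality then follows immediately: for any $\varepsilon \in (0,1)$ one has $(at)^{-d/2} \leq (\varepsilon at)^{-d/2}$ and $e^{-a|x-\xi|^2/t} \leq e^{-a(1-\varepsilon)|x-\xi|^2/t}$, so taking $C := \pi^{d/2}$ gives the claim (with no actual dependence on $\varepsilon$ on the right). There is no substantive obstacle in the argument; the only noteworthy point is that the parameter $\varepsilon$ appearing in the statement is in fact redundant — the identity above is uniformly sharper — and presumably reflects a more general version of the estimate in Friedman's book, where additional polynomial factors against the Gaussian would have to be absorbed into the exponent at the cost of the $\varepsilon$-loss in both the prefactor and the exponential rate.
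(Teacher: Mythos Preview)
Your argument is correct: completing the square gives the exact identity $I_a = (\pi/(at))^{d/2}\,e^{-a|x-\xi|^2/t}$, from which the stated inequality follows trivially. The paper does not supply its own proof of this lemma --- it is simply quoted from Friedman --- so there is nothing to compare against; your remark that the $\varepsilon$ in the statement is superfluous here, and likely an artifact of a more general version in Friedman where polynomial factors must be traded against the Gaussian, is well taken.
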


Let us recall \cite[Lemma 3.4(a)]{pages2020}, with an immediate adaptation to the non-homogeneous case.
\begin{lemma}
\label{lemma:strong_error_x}
Let $Z$ be solution to the generic SDE:
$$ Z^x_0 = x \in \mathbb{R}^d, \quad dZ^x_t = b(t,Z^x_t)dt + \sigma(t,Z^x_t)dW_t, \ t \in [0,T],$$
where $b$ and $\sigma$ are Lipschitz continuous in $(t,x)$ and where $\sigma$ is bounded.
Then for $p \ge 1$,
$$ \forall t \in [0,T], \ \forall x \in \mathbb{R}^d, \quad \|Z^x_t - x \|_p \le C(p,T,\lip{b},\lip{\sigma},\|\sigma\|_\infty) \left( t|b(0,x)| + t^{1/2} \right).$$
\end{lemma}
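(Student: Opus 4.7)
The plan is a standard combination of Minkowski's inequality, the Burkholder--Davis--Gundy (BDG) inequality, and Gronwall's lemma, imitating the argument used in the proof of Lemma \ref{lemma:dTV_X_Y} but keeping track of the $L^p$ norm rather than the $L^2$ norm of the supremum.

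First, I would write the integral form of the SDE,
\[
Z^x_t - x = \int_0^t b(s, Z^x_s)\,ds + \int_0^t \sigma(s, Z^x_s)\,dW_s,
\]
and apply Minkowski's inequality (in $L^p(\Omega)$) to get
\[
\|Z^x_t - x\|_p \le \int_0^t \|b(s, Z^x_s)\|_p\,ds + \Bigl\| \int_0^t \sigma(s, Z^x_s)\,dW_s \Bigr\|_p.
\]
For the drift integrand I would use joint Lipschitz continuity of $b$ together with the reference point $(0,x)$:
\[
|b(s, Z^x_s)| \le |b(0,x)| + \lip{b}\bigl(s + |Z^x_s - x|\bigr),
\]
so that, taking $L^p$ norms,
\[
\int_0^t \|b(s,Z^x_s)\|_p\,ds \le t|b(0,x)| + \tfrac{1}{2}\lip{b} t^2 + \lip{b}\int_0^t \|Z^x_s - x\|_p\,ds.
\]
For the stochastic integral I would invoke BDG together with the boundedness of $\sigma$:
\[
\Bigl\| \int_0^t \sigma(s,Z^x_s)\,dW_s \Bigr\|_p \le C_p \Bigl\| \Bigl(\int_0^t \|\sigma(s,Z^x_s)\|^2\,ds\Bigr)^{1/2} \Bigr\|_p \le C_p \|\sigma\|_\infty t^{1/2}.
\]

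Combining these two estimates yields, for all $t \in [0,T]$,
\[
\|Z^x_t - x\|_p \le C\bigl(t|b(0,x)| + t^{1/2}\bigr) + \lip{b}\int_0^t \|Z^x_s - x\|_p\,ds,
\]
where the constant $C$ depends only on $p$, $T$, $\lip{b}$, $\lip{\sigma}$ and $\|\sigma\|_\infty$ (the $t^2$ term is absorbed into $t^{1/2}$ using $t \le T$). Gronwall's lemma then gives the desired bound
\[
\|Z^x_t - x\|_p \le C\bigl(t|b(0,x)| + t^{1/2}\bigr) e^{\lip{b} T},
\]
after absorbing $e^{\lip{b} T}$ into the constant.

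The only mildly delicate point is ensuring that the $L^p$ norm of $Z^x_t - x$ is finite to begin with (so that Gronwall can be applied); this can be handled by the standard localization argument via stopping times $\tau_N := \inf\{s \ge 0 : |Z^x_s - x| \ge N\}$, deriving the inequality for $Z^x_{t \wedge \tau_N} - x$ and then letting $N \to \infty$ by monotone convergence, using that $Z^x$ has finite moments of all orders under Lipschitz coefficients with $\sigma$ bounded. No step is really an obstacle; this is essentially a textbook strong-error estimate, refined only by the explicit appearance of $|b(0,x)|$ (rather than $\|b\|_\infty$) thanks to the Lipschitz bound on $b$ around the base point $(0,x)$.
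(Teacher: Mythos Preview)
Your argument is correct and is precisely the standard Minkowski/BDG/Gronwall computation one expects here. Note that the paper does not actually prove this lemma: it is stated as a recall of \cite[Lemma 3.4(a)]{pages2020} ``with an immediate adaptation to the non-homogeneous case'', so your write-up simply fills in the (routine) details the authors omitted.
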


\begin{lemma}
\label{lemma:strong_error}
Let $X$ and $Y$ be the solution to the two general SDEs \eqref{eq:SDE_def:1} and \eqref{eq:SDE_def:2}. Assume that $b$ and $\sigma$ are Lipschitz continuous in $(t,x)$ and that $\sigma$ is bounded. Then for every $p \ge 1$,
\begin{align}
& \forall t \in [0,T], \ \forall x \in \mathbb{R}^d, \ \|X^x_t - Y^x_t \|_p \le C \left(t(1 +\Delta b(x)) + t^{3/2}(|b_1|+|b_2|)(0,x) + \Delta \sigma(x)t^{1/2} \right)
\end{align}
\end{lemma}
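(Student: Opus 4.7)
The plan is to start from the integral form
\[
X^x_t - Y^x_t = \int_0^t \bigl(b_1(s,X^x_s) - b_2(s,Y^x_s)\bigr)ds + \int_0^t \bigl(\sigma_1(s,X^x_s) - \sigma_2(s,Y^x_s)\bigr)dW_s,
\]
apply Minkowski's inequality to the drift integral and the Burkholder-Davis-Gundy inequality to the stochastic integral, and control the two pointwise integrands separately. Crucially, rather than introducing $\|X^x_s - Y^x_s\|_p$ on the right-hand side (which would force a Gronwall loop), I will pivot both processes around the deterministic reference point $x$ to avoid any recursion.

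For the drift, I write the telescoping splitting
\[
b_1(s,X^x_s) - b_2(s,Y^x_s) = \bigl[b_1(s,X^x_s) - b_1(s,x)\bigr] + \bigl[b_1(s,x) - b_2(s,x)\bigr] + \bigl[b_2(s,x) - b_2(s,Y^x_s)\bigr].
\]
The outer terms are bounded in $L^p$ by $\lip{b_1}\|X^x_s - x\|_p$ and $\lip{b_2}\|Y^x_s - x\|_p$, each of which is at most $C(s(|b_i|)(0,x) + s^{1/2})$ by Lemma \ref{lemma:strong_error_x}. The middle term is deterministic and bounded by $Cs + \Delta b(x)$ using $\textup{Lip}_t(b_i)$ to pass from time $s$ to time $0$. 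Integrating in $s$ from $0$ to $t$ contributes, after absorbing $t^2$ into $t^{3/2}\cdot T^{1/2}$ and $t^2(|b_i|)(0,x)$ into $t^{3/2}(|b_i|)(0,x)\cdot T^{1/2}$, a term of the form $C\bigl(t(1+\Delta b(x)) + t^{3/2}(|b_1|+|b_2|)(0,x)\bigr)$.

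For the diffusion part I run the exact same telescoping on $\sigma_1(s,X^x_s) - \sigma_2(s,Y^x_s)$, square the resulting $L^p$-norm using $(a+b+c)^2 \le 3(a^2+b^2+c^2)$, integrate, take the square root, and apply BDG. The $\Delta \sigma(x)^2$ contribution integrates to $t\,\Delta\sigma(x)^2$, whose square root gives the advertised $\Delta\sigma(x)\,t^{1/2}$; the $\|X^x_s - x\|_p^2$ and $\|Y^x_s - x\|_p^2$ contributions integrate to something dominated by $t^3(|b_1|+|b_2|)^2(0,x) + t^2$, whose square root is absorbed into $t^{3/2}(|b_1|+|b_2|)(0,x) + t$.

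Summing the drift and diffusion bounds yields exactly
\[
\|X^x_t - Y^x_t\|_p \le C\bigl(t(1+\Delta b(x)) + t^{3/2}(|b_1|+|b_2|)(0,x) + \Delta\sigma(x)\,t^{1/2}\bigr),
\]
with $C$ depending only on $p$, $T$, the Lipschitz constants of $b_i,\sigma_i$ and $\|\sigma_i\|_\infty$. There is no real obstacle here: the argument is essentially book-keeping. The only point requiring attention is choosing the splitting through $x$ rather than through $Y^x_s$ (or $X^x_s$), so that Lemma \ref{lemma:strong_error_x} can be invoked in closed form and Gronwall is not needed; this is also what makes the dependence on $|b_1(0,x)|+|b_2(0,x)|$ explicit with the correct power $t^{3/2}$.
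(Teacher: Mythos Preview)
Your proposal is correct and follows essentially the same approach as the paper: pivot both integrands through the deterministic point $x$ (avoiding Gronwall), apply Minkowski and BDG, and invoke Lemma~\ref{lemma:strong_error_x} for $\|X^x_s-x\|_p$ and $\|Y^x_s-x\|_p$. The only cosmetic difference is that the paper pivots directly through $(0,x)$ using joint $(t,x)$-Lipschitz continuity, whereas you pivot through $(s,x)$ and then invoke $\textup{Lip}_t$ separately; the resulting bounds are identical.
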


\begin{proof}
We first deal with the case $p \ge 2$. We have
\begin{align*}
& \|X^x_t - Y^x_t\|_p \le \left\| \int_0^t (b_1(s,X^x_s) - b_2(s,Y^x_s))ds \right\|_p + \left\| \int_0^t (\sigma_1(s,X^x_s) - \sigma_2(s,Y^x_s))dW_s \right\|_p \\
& \quad \le \left\| \int_0^t (b_1(s,X^x_s)-b_1(0,x))ds \right\|_p + t\Delta b(x) + \left\| \int_0^t (b_2(s,Y^x_s)-b_2(0,x))ds \right\|_p \\
& \quad \quad + \left\| \int_0^t (\sigma_1(s,X^x_s)-\sigma_1(0,x))dW_s \right\|_p + \left\| \int_0^t \Delta \sigma(x)dW_s \right\|_p + \left\| \int_0^t (\sigma_2(s,Y^x_s)-\sigma_2(0,x))dW_s \right\|_p
\end{align*}
But using the Burkholder-Davis-Gundy and the generalized Minkowski inequalities, we have
\begin{align*}
& \left\| \int_0^t (\sigma_1(s,X^x_s) - \sigma_1(0,x))dW_s \right\|_p \le C^{\text{BDG}}_p \lip{\sigma_1} \left\| \int_0^t |(s,X^x_s) - (0,x)|^2 ds \right\|_{p/2}^{1/2} \\
& \quad \le C^{\text{BDG}}_p \lip{\sigma_1} \left(\int_0^t \| (s,X^x_s) - (0,x) \|_p^2 ds \right)^{1/2} \le C(t + t^{3/2}|b_1(0,x)|),
\end{align*}
where $C^{\text{BDG}}_p$ is a constant which only depends on $p$ and where we used Lemma \ref{lemma:strong_error_x}. So that
\begin{align*}
& \|X^x_t - Y^x_t\|_p \le \lip{b_1} \int_0^t \|(s,X^x_s) - (0,x) \|_p ds + \lip{b_2}\int_0^t \|(s,Y^x_s) - (0,x)\|_p + t\Delta b(x) \\
& \quad \quad + C(t + t^{3/2}(|b_1|+|b_2|)(0,x)) + \Delta \sigma(x) \sqrt{t}\|W_1\|_p \\
& \quad \le C \left( t(\Delta b(x) +1) + t^{3/2}(|b_1|+|b_2|)(0,x) + \Delta \sigma(x)\sqrt{t} \right)
\end{align*}
which completes the proof for $p \ge 2$. For $p \in [1,2)$, the inequality is still true remarking that $\| \cdot \|_p \le \| \cdot \|_2$.
\end{proof}


\begin{thebibliography}{{Cle}21}

\bibitem[AS64]{abramowitz1964}
Milton Abramowitz and Irene~A. Stegun.
\newblock {\em Handbook of mathematical functions with formulas, graphs, and
  mathematical tables}.
\newblock National Bureau of Standards Applied Mathematics Series, No. 55. U.
  S. Government Printing Office, Washington, D. C., 1964.

\bibitem[BJ22]{bencheikh2020}
Oumaima Bencheikh and Benjamin Jourdain.
\newblock Convergence in total variation of the {E}uler-{M}aruyama scheme
  applied to diffusion processes with measurable drift coefficient and additive
  noise.
\newblock {\em SIAM J. Numer. Anal.}, 60(4):1701--1740, 2022.

\bibitem[BL05]{bertoldi2005}
Marcello Bertoldi and Luca Lorenzi.
\newblock Estimates of the derivatives for parabolic operators with unbounded
  coefficients.
\newblock {\em Trans. Amer. Math. Soc.}, 357(7):2627--2664, 2005.

\bibitem[BP21]{bras2021-2}
Pierre {Bras} and Gilles {Pag{\`e}s}.
\newblock {Convergence of Langevin-Simulated Annealing algorithms with
  multiplicative noise}.
\newblock {\em arXiv e-prints}, page arXiv:2109.11669, September 2021.

\bibitem[BT96]{bally1996}
Vlad Bally and Denis Talay.
\newblock The law of the {E}uler scheme for stochastic differential equations.
  {I}. {C}onvergence rate of the distribution function.
\newblock {\em Probab. Theory Related Fields}, 104(1):43--60, 1996.

\bibitem[Cer00]{cerrai2000}
Sandra Cerrai.
\newblock Analytic semigroups and degenerate elliptic operators with unbounded
  coefficients: a probabilistic approach.
\newblock {\em J. Differential Equations}, 166(1):151--174, 2000.

\bibitem[{Cle}21]{clement2021}
Emmanuelle {Clement}.
\newblock {Hellinger and total variation distance in approximating Levy driven
  SDEs}.
\newblock {\em arXiv e-prints}, page arXiv:2103.09648, March 2021.

\bibitem[Fri64]{friedman}
Avner Friedman.
\newblock {\em Partial differential equations of parabolic type}.
\newblock Prentice-Hall, Inc., Englewood Cliffs, N.J., 1964.

\bibitem[GL08]{gobet2008}
Emmanuel Gobet and C\'{e}line Labart.
\newblock Sharp estimates for the convergence of the density of the {E}uler
  scheme in small time.
\newblock {\em Electron. Commun. Probab.}, 13:352--363, 2008.

\bibitem[Kra04]{krasikov2004}
Ilia Krasikov.
\newblock New bounds on the {H}ermite polynomials.
\newblock {\em East J. Approx.}, 10(3):355--362, 2004.

\bibitem[LP17]{lemaire2017}
Vincent Lemaire and Gilles Pag\`es.
\newblock Multilevel {R}ichardson-{R}omberg extrapolation.
\newblock {\em Bernoulli}, 23(4A):2643--2692, 2017.

\bibitem[Lun97]{lunardi1997}
Alessandra Lunardi.
\newblock Schauder estimates for a class of degenerate elliptic and parabolic
  operators with unbounded coefficients in {${\bf R}^n$}.
\newblock {\em Ann. Scuola Norm. Sup. Pisa Cl. Sci. (4)}, 24(1):133--164, 1997.

\bibitem[MPZ21]{menozzi2021}
Stéphane Menozzi, Antonello Pesce, and Xicheng Zhang.
\newblock Density and gradient estimates for non degenerate {B}rownian {SDE}s
  with unbounded measurable drift.
\newblock {\em J. Differential Equations}, 272:330--369, 2021.

\bibitem[PP20]{pages2020}
Gilles {Pagès} and Fabien {Panloup}.
\newblock {Unajusted Langevin algorithm with multiplicative noise: Total
  variation and Wasserstein bounds}.
\newblock {\em arXiv e-prints, to appear in Annals of Applied Probability},
  page arXiv:2012.14310, 2020.

\bibitem[QZ04]{qian2003}
Zhongmin Qian and Weian Zheng.
\newblock A representation formula for transition probability densities of
  diffusions and applications.
\newblock {\em Stochastic Process. Appl.}, 111(1):57--76, 2004.

\bibitem[RG11]{richardson1911}
Lewis~Fry Richardson and Richard~Tetley Glazebrook.
\newblock Ix. the approximate arithmetical solution by finite differences of
  physical problems involving differential equations, with an application to
  the stresses in a masonry dam.
\newblock {\em Philosophical Transactions of the Royal Society of London.
  Series A, Containing Papers of a Mathematical or Physical Character},
  210(459-470):307--357, 1911.

\end{thebibliography}

\end{document}